\newtheorem{theorem}{Theorem}
\newtheorem{lemma}[theorem]{Lemma}
\newtheorem{proposition}[theorem]{Proposition}
\theoremstyle{definition}
\newtheorem{definition}[theorem]{Definition}
\newtheorem{remark}[theorem]{Remark}
\begin{document}

\title{Length two extensions of modules for the Witt Algebra}
\date{}
\author{Kathlyn Dykes}
\maketitle

\begin{abstract} 
In this paper, we establish an explicit classification of length two extensions of tensor modules for the Witt algebra using the cohomology of the Witt algebra with coefficients in the module of the space of homomorphisms between the two modules of interest. To do this we extended our module to a module that has a compatible action of the commutative algebra of Laurent polynomials in one variable. In this setting, we are be able to directly compute all possible 1-cocycles.
\end{abstract}

\section{Introduction}

This paper is concerned with classifying all length two extensions of tensor modules of the Witt algebra, $W_1$. This is a question that has been studied before by Martin and Piard in their 1992 paper \ref{mp}. In \ref{mp}, Martin and Piard classifies all the cases in which these extensions exist and describe the action of the generators of the Witt algebra. In this paper, we construct much better bases for these extensions which allow us to give explicit formulas for the action of the Witt algebra.
 By comparison, our results are similar to the classification of the cohomology of the Lie algebra of vector fields on a line given by Feigin and Fuks in their 1982 paper \ref{FF}.

In his 1992 paper \ref{Mathieu}, Mathieu gave a complete classification of irreducible Virasoro modules with finite dimensional weight spaces. Consequently, as any Witt algebra module is a Virasoro module where the central element acts trivially, his results are also applicable to Witt algebra modules. As was proved in \ref{Mathieu}, every irreducible $W_1$-module with finite-dimensional weight spaces is either a highest weight modules, a lowest weight module, a tensor module or quotient of a tensor module. 

In this paper, the method used to obtain the classification of these module extensions $M$, begins with moving up to the $A$-cover $\widehat{M}$, as described in \ref{W_nmodules}, where $A$ is the algebra of Laurent polynomials in one variable. The idea is to obtain a classification in the context of $AW_1$-modules, which is a simpler problem computationally than in the $W_1$-module setting. We obtain a surjection from the space where the classification is easily calculated back to the original module, which will give a  classification of these extensions.
 In fact, we can extend this definition to define an $A_nW_n$-module \ref{W_nmodules}. The methods used in this work will extend to the case of $W_n$, giving us somewhere to start in the classification of length two extensions of $W_n$-modules.

The paper is organized as follows: In Section 2, we find when these extensions are polynomial. In Section 3, we classify all polynomial cocycles. In Section 4, we classify all delta function cocycles. In Section 5, we look at a special case that does not work under the general approach used in Section 2. Finally, we end with a summary of all results. 

\subsection{The Witt algebra and its irreducible modules}

In this paper, we are interested in the Witt algebra, $W_1$. We consider the algebra of Laurent polynomials over the complex numbers, denoted by $ A = \mathbb{C} [t^{\pm1}]$. The Witt algebra is the set of all derivations of $A$, which is a Lie algebra spanned by the basis vectors $\left\lbrace e_k = t^{k+1} \frac{d}{dt} | k \in \mathbb{Z} \right\rbrace$ with the bracket $[e_k, e_s]  = (s-k)e_{k+s}$.

We can consider tensor modules which are parameterized by two constants, $\gamma, \alpha \in \mathbb{C}$. The structure of these tensor $W_1$-modules, denoted by $T(\alpha, \gamma)$, are given by $$T(\alpha,\gamma) = \bigoplus_{m \in \gamma + \mathbb{Z}} \mathbb{C} v_m$$ with $W_1$ action
\[
 e_k v_m = (m + \alpha k) v_{m+k}
\]

Note that $T(\gamma, \alpha) \cong T(\gamma', \alpha)$ if $\gamma - \gamma' \in \mathbb{Z}$, as stated in Remark 1.1 of \ref{kac}. Therefore, we will always assume that $0 \leq \text{Re}(\gamma) < 1$ and we will treat the cases of $\gamma \in \mathbb{Z}$ and $\gamma =0$ as interchangeable.

These tensor modules are irreducible except in two special cases: $\alpha =0, \gamma \in \mathbb{Z}$ and $\alpha =1, \gamma \in \mathbb{Z}$ by Proposition 1.1 of \ref{kac}. For convenience, we will denote by $D(0) = \text{span}\lbrace v_0 \rbrace$ the irreducible submodule of $T(0,0)$ and $T^\circ(1) = \text{span}\lbrace v_m | m \in \mathbb{Z}, m \neq 0 \rbrace$ the irreducible submodule of $T(1,0)$.

\begin{definition} A $W_1$-module $V$ is a \emph{weight module} if there exist weights $\lambda \in \mathbb{C}$ such that $V = \oplus_{\lambda} V_\lambda$ where the weight space of weight $\lambda$ is given by
\[
V_\lambda = \left\lbrace v \in V | e_0 v = \lambda v \right\rbrace
\]
\end{definition}

\begin{definition}
 A weight module is called \emph{cuspidal} if the dimensions of all weight spaces are bounded by a common constant.
\end{definition}

Tensor $W_1$-modules are cuspidal modules as every weight space has dimension 1. In fact, every irreducible cuspidal module of $W_1$ is isomorphic to either a tensor module, $D(0)$ or $T^\circ (1)$, as the only cuspidal highest and lowest weight modules are trivial (Corollary III.3 of \ref{mp2}). 

\begin{definition}
If $V$ is both an $A$-module and a $W_1$-module, then the $A$-action and the $W_1$-action is \emph{compatible} if 
\begin{equation}
 (xf)v = x(fv) - f(xv), \text{ for all } x\in W_1, f \in A, v \in V
\end{equation}
If $V$ has a compatible $A$-action and $W_1$ action, it is called an \emph{$AW_1$-module} \ref{W_nmodules}.
\end{definition}

The space of $\text{Hom}_\mathbb{C}(A, V)$ is an example of $AW_1$-module  \ref{W_nmodules}, with the actions
\begin{align}\label{homAM}
(x \psi) (f) =& x(\psi(f)) - \psi(x(f)) \\
(g \psi)(f) =& \psi(gf), \text{ for } \psi \in \text{Hom}(A,V), x\in W_1, f,g \in A
\end{align}

In general, this space is too ``big'' in the sense that it may have infinite dimensional weight spaces. Instead, we would like to look at a submodule of Hom$(A,V)$ which will better preserve the structure of the weight spaces of the module $V$.

\begin{definition} If $V$ is a $W_1$-module, the $A$-cover $\widehat{V}$ is defined as
\[ \widehat{V} = \text{span}\lbrace \phi(x,u) | x \in W_1, u \in V \rbrace \subset \text{Hom}(A, V) \]
where $\phi(x,u)(f) = (fx)(u)$ for all $f \in A$. 
\end{definition}

The $W_1$-action and $A$-action on the $A$-cover is
\begin{align}\label{Acover}
 y\phi(x,u) =& \phi([y,x],u)+\phi(x,yu) \\
 g\phi(x,u) =& \phi(gx,u)
\end{align}
for $x,y \in W_1, u \in V, g \in A$. 

The following Theorem is important to the motivation behind considering the $A$-cover of a cuspidal module $V$.

\begin{theorem}[Theorem 4.10 of \ref{W_nmodules}]\label{cupsidal}
If $V$ is a cuspidal module, then so is $\widehat{V}$. 
\end{theorem}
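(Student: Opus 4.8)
The plan is to reduce cuspidality of $\widehat V$ to a uniform bound on the weight multiplicities $\dim\widehat V_\mu$, and then to obtain that bound by showing that the mode-index dependence of the defining elements is polynomial of bounded degree. First I would record the weight grading of $\widehat V$: since $[e_0,e_k]=ke_k$, the $W_1$-action \eqref{Acover} gives $e_0\phi(e_k,u)=(k+\lambda)\phi(e_k,u)$ for $u\in V_\lambda$, so $\phi(e_k,u)$ has weight $k+\lambda$ and $\widehat V_\mu=\mathrm{span}\{\phi(e_k,u):k\in\mathbb Z,\ u\in V_{\mu-k}\}$. Because $\phi(e_k,u)=\phi(t^ke_0,u)=t^k\phi(e_0,u)$ and the $A$-action of $t$ raises weight by one, $\widehat V$ is generated over $A$ by $\{\phi(e_0,u):u\in V\}$; moreover, if $\widehat V$ were finitely generated over $A$ by $r$ weight vectors, then $\dim\widehat V_\mu\le r$ for every $\mu$, giving cuspidality at once. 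Thus it suffices to prove that $\widehat V$ is a finitely generated $A$-module.

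Second, I would make the finiteness visible through evaluation. Every $\psi\in\widehat V\subseteq\mathrm{Hom}(A,V)$ is determined by its values $\psi(t^n)$, so the map $\psi\mapsto(\psi(t^n))_{n\in\mathbb Z}$ embeds $\widehat V_\mu$ into $\prod_{n\in\mathbb Z}V_{\mu+n}$, sending $\phi(e_k,u)$ to the sequence $(e_{n+k}u)_n$. Hence $\dim\widehat V_\mu$ equals the dimension of the span $W_\mu$ of all sequences $(e_{n+k}u)_n$ with $u\in V_{\mu-k}$. Since $V$ is cuspidal, each $V_{\mu+n}$ has dimension at most some fixed $d$, so finiteness of $\dim W_\mu$ is precisely the statement that these mode sequences, as $k$ and $u$ vary, depend on the index in a bounded-rank way.

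The heart of the argument, and the step I expect to be the main obstacle, is establishing this bounded dependence for an arbitrary cuspidal $V$. For a tensor module the computation is immediate: taking $V=T(\alpha,\gamma)$ and $u=v_{\mu-k}$ one finds $e_{n+k}v_{\mu-k}=\bigl(\mu+(\alpha-1)k+\alpha n\bigr)v_{\mu+n}$, which is affine in $n$; therefore $W_\mu$ lies in the two-dimensional span of $(v_{\mu+n})_n$ and $(nv_{\mu+n})_n$, so $\dim\widehat V_\mu\le 2$ and $\widehat{T(\alpha,\gamma)}$ is free of rank at most two over $A$. The same direct check handles $D(0)$ and $T^\circ(1)$. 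For general cuspidal $V$ I would invoke the classification (\ref{Mathieu}, together with Corollary III.3 of \ref{mp2}) to reduce to a finite filtration whose subquotients are tensor modules, $D(0)$, or $T^\circ(1)$, and then propagate the finite-generation bound up the filtration.

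The delicate point in this last reduction is that $\widehat{(-)}$ is a span inside the only left-exact functor $\mathrm{Hom}(A,-)$, so a short exact sequence $0\to V'\to V\to V''\to 0$ of cuspidal modules does not automatically yield a clean exact sequence of $A$-covers. I would therefore track the kernel of the canonical evaluation surjection explicitly, showing that the relations among the $\phi(e_0,u)$ forced by $V'$ and $V''$ combine to keep $\widehat V$ finitely generated over $A$. Controlling these relations---rather than the essentially routine weight and tensor-module computations---is where the real work lies, and it is what upgrades the pointwise bound on tensor modules to the uniform multiplicity bound asserted by Theorem~\ref{cupsidal}.
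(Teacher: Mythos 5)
First, a framing point: the paper you are working from does not prove this statement at all --- it is imported verbatim as Theorem 4.10 of \ref{W_nmodules} (Billig--Futorny) --- so your attempt has to be measured against the proof given there. Your preliminary reductions are correct: the weight computation for $\phi(e_k,u)$, the identity $\phi(e_k,u)=t^k\phi(e_0,u)$ from \eqref{Acover}, the observation that generation of $\widehat V$ over $A$ by $r$ weight vectors forces $\dim\widehat V_\mu\le r$, the evaluation embedding $\widehat V_\mu\hookrightarrow\prod_n V_{\mu+n}$, and the rank-two bound for tensor modules (your computation $e_{n+k}v_{\mu-k}=(\mu+(\alpha-1)k+\alpha n)v_{\mu+n}$ is exactly the content of Lemma \ref{epsilon_j}, the span of $\varepsilon_j,\eta_j$). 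The genuine gap is the step you name and then defer: propagation along a composition series. For an extension $0\to V'\to V\to V''\to 0$ the kernel $K$ of the projection $\widehat V\to\widehat{V''}$ contains $\widehat{V'}$ but is in general strictly larger, and it sits inside $\mathrm{Hom}(A,V')$, whose weight spaces are infinite dimensional (isomorphic to $\prod_n V'_{\mu+n}$), so there is no a priori finiteness to exploit. Concretely, splitting $V=V'\oplus V''$ as vector spaces with extension cocycle $\tau$, an element of $K_\mu$ evaluates to the sequence $n\mapsto\sum_j c_j\bigl(e_{n+k_j}v'_j+\tau(n+k_j,\mu-k_j)\,v'_{\mu+n}\bigr)$; bounding $\dim K_\mu$ therefore requires knowing that the functions $n\mapsto\tau(n+k,m)$ span a finite-dimensional space, i.e.\ that cocycles between cuspidal modules depend on the mode index in a bounded (polynomial-plus-delta) fashion. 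But that is essentially the main theorem of the present paper, which is proved \emph{using} Theorem \ref{cupsidal}; likewise Proposition \ref{Prop}, the tool that would describe $K$, carries finite-dimensionality of weight spaces as a hypothesis. So ``propagating the finite-generation bound up the filtration'' is not routine bookkeeping: as stated it is circular, and making it non-circular amounts to redoing Martin--Piard's direct analysis of the functional equation \eqref{taucondition}, which is a substantial piece of work your proposal does not contain.

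The proof in \ref{W_nmodules} avoids this entirely and takes a different route: by the jet-module techniques of \ref{jets}, every cuspidal module satisfies polynomial ``differentiator'' identities --- quadratic relations of the shape $\sum_{i=0}^{m}(-1)^i\binom{m}{i}e_{k-i}e_{s+i}=0$ on $V$ for $m$ large, with $m$ depending only on the multiplicity bound. Feeding these into the covering relation $y\phi(x,u)=\phi([y,x],u)+\phi(x,yu)$ produces recurrences among the generators $\phi(e_k,u)$, uniform in $k$, which bound $\dim\widehat V_\mu$ directly. This requires no appeal to Mathieu's classification and no induction over composition series, which is also why it works for $W_n$ with $n>1$. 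Two smaller points: your filtration step silently assumes that cuspidal $W_1$-modules have finite length (true, but it needs its own short support-counting argument), and your tensor-module computation, while correct, only ever establishes the base case --- so the proposal, as written, proves the theorem only for irreducible (and completely reducible) cuspidal modules.
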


From $\widehat{V}$, we can always go back to the space $V$ via the map $\pi: \widehat{V} \rightarrow V$ where $\pi(\phi(x,u)) = \phi(x,u)(1) = x.u$. By Proposition 4.5 of \ref{W_nmodules}, this map is surjective only if $W_1 V=V$.

\section{Cocycle Functions} 

In this paper, we will concern ourselves with weight $W_1$-modules $M$ which possess the short exact sequence:
\[
0 \rightarrow T(\alpha, \gamma) \rightarrow M \rightarrow  T(\beta, \gamma')\rightarrow 0
\]
These extensions are taken to be a weight extensions, i.e. the weight space $M_\lambda$ is given by the exact sequence
\[
0 \rightarrow T(\alpha, \gamma)_\lambda \rightarrow M_\lambda \rightarrow T(\beta, \gamma')_\lambda \rightarrow 0
\]

Here we fix a basis $\lbrace v_m \rbrace$ of $T(\alpha, \gamma)$ and a basis $\lbrace w_m \rbrace$ of $T(\beta,\gamma')$. In this way, $M$ is spanned by basis vectors $v_m$ and $w_m$, where these $w_m \in M$ are mapped to $w_m \in T(\beta,\gamma')$ under the surjection $\overline{\cdot} :M \rightarrow T(\beta,\gamma')$. 

The goal of this section is to make use of the A-cover to find an appropriate basis of $M$ so that the corresponding cocycles will be polynomial in almost all cases.

\subsection{Parameters of the module extension}

As $T(\alpha, \gamma)$ is a $W_1$-submodule of $M$, then the $W_1$ action is
\[
 e_k v_m = (m+ \alpha k) v_{m+k}, \text{ for } k\in \mathbb{Z}, m \in \gamma + \mathbb{Z}
\]
The $W_1$-action on the $w_m$ basis vectors is defined as
\begin{equation}\label{basicaction}
 e_k w_m = (m+ \beta k) w_{m+k} + \tau(k,m)v_{m+k}, \text{ for } k\in \mathbb{Z}, m \in \gamma' + \mathbb{Z}
\end{equation}
where $\tau(k,m)$ is some function in $k$ and $m$.

 When $\gamma + \mathbb{Z} \neq \gamma' + \mathbb{Z}$, this gives a trivial extension in the sense that $\tau(k,m)$ is the zero function. Therefore, we may assume these cosets are equal, so that without loss of generality, we will assume $\gamma = \gamma'$ for the rest of the paper. Here we see that $M$ is parameterized by four objects: the function $\tau(k,m)$ and the complex numbers $\alpha, \beta$ and $\gamma$.

As the $W_1$-action on $M$ is a module action, we obtain
 the following condition on $\tau$:
\begin{align}\label{taucondition}
\begin{split}
(s-k) \tau(k+s, m) =& (m + \beta s)\tau(k,m+s)-  (m + \beta k) \tau(s,m+k)
\\ & + (m+s + \alpha k )\tau(s,m) - (m+k+ \alpha s)\tau(k,m)
\end{split}
\end{align}
These $\tau$-functions also have a cohomological interpretation.

\begin{lemma}
For a Lie algebra $L$, let $M$ be the $L$-module extension of the $L$-modules $(M_1, \rho_1), (M_2, \rho_2)$ given by
\[
0 \rightarrow M_1 \rightarrow M \rightarrow M_2 \rightarrow 0
\]
The isomorphism classes of these module extensions $M$ are in one-to-one correspondence to the cohomology, $H^1 (L, \text{Hom}_\mathbb{C}(M_2, M_1))$.
\end{lemma}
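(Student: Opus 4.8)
The plan is to invoke the standard Chevalley–Eilenberg description of $H^1$ and to match the freedom in choosing a vector-space splitting of the sequence with the coboundary relation. Recall that the $L$-module $N := \mathrm{Hom}_{\mathbb{C}}(M_2, M_1)$ carries the action $(x \cdot \phi)(m) = \rho_1(x)\phi(m) - \phi(\rho_2(x)m)$, exactly as in \eqref{homAM}, and that a $1$-cocycle is a linear map $c\colon L \to N$ satisfying $c([x,y]) = x\cdot c(y) - y\cdot c(x)$, while a coboundary is a map of the form $x \mapsto x\cdot\theta$ for a fixed $\theta \in N$. Thus $H^1(L,N) = Z^1(L,N)/B^1(L,N)$, and the goal is to produce a bijection between this quotient and the set of isomorphism classes of extensions.

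First I would pass from an extension to a cocycle. Since the sequence splits over $\mathbb{C}$, I fix a linear section $\sigma\colon M_2 \to M$ of the surjection $M \to M_2$ and view $M = M_1 \oplus \sigma(M_2)$ as vector spaces. For $x \in L$ the element $x\cdot\sigma(m) - \sigma(\rho_2(x)m)$ projects to $0$ in $M_2$, hence lies in $M_1$; this defines $c(x) \in N$ by $c(x)(m) = x\cdot\sigma(m) - \sigma(\rho_2(x)m)$. Expanding $[x,y]\cdot\sigma(m) = x\cdot(y\cdot\sigma(m)) - y\cdot(x\cdot\sigma(m))$, using that the action is an $L$-action on $M$ and that $M_1$ is a submodule, and collecting the terms that land in $M_1$, yields precisely the cocycle identity, so $c \in Z^1(L,N)$.

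Next I would check that the class $[c]$ is independent of the section. Any two sections differ by a linear map $M_2 \to M_1$, say $\sigma' = \sigma + \theta$ with $\theta \in N$, and a direct computation gives that $c' - c$ is the coboundary $x \mapsto x\cdot\theta$; this shows $M \mapsto [c]$ is well defined on isomorphism classes. For surjectivity I would run the construction in reverse: given $c \in Z^1(L,N)$, equip the vector space $M_1 \oplus M_2$ with $x\cdot(u,m) = (\rho_1(x)u + c(x)(m),\ \rho_2(x)m)$ and verify, again using the cocycle identity, that this is an $L$-action realizing the prescribed extension. This matches the concrete setup of \eqref{basicaction}, where $c(e_k)(w_m) = \tau(k,m)v_{m+k}$ and the cocycle condition recovers \eqref{taucondition}.

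I expect the main obstacle to be bookkeeping rather than anything conceptual: the verification that the two equivalence relations match — isomorphism of extensions on one side, cohomologous cocycles on the other — requires tracking how a module isomorphism $M \to M'$ restricting to the identity on $M_1$ and inducing the identity on $M_2$ acts on the chosen sections. Once one observes that such an isomorphism is determined by a single element $\theta \in N$, namely its $M_1$-valued off-diagonal part, and that composing it with a section shifts the associated cocycle by the coboundary $x\mapsto x\cdot\theta$, the bijection follows. The remaining cocycle and module-axiom checks are routine expansions of the $L$-action axioms.
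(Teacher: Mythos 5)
The paper contains no proof of this lemma to compare against: it is stated as a classical fact, and the text immediately following it simply specializes the statement to $W_1$, identifying the $\tau(k,m)$ of (\ref{basicaction}) as a $1$-cocycle with values in $\mathrm{Hom}(T(\beta,\gamma),T(\alpha,\gamma))$ and coboundaries with the trivial extensions. Your argument is the standard Chevalley--Eilenberg proof and it is correct: the splitting-dependent defect $c(x)(m)=x\cdot\sigma(m)-\sigma(\rho_2(x)m)$ lands in $M_1$ and satisfies the cocycle identity; changing the splitting by $\theta\in N=\mathrm{Hom}_{\mathbb{C}}(M_2,M_1)$ shifts $c$ by the coboundary $x\mapsto x\cdot\theta$; and the inverse construction $x\cdot(u,m)=\bigl(\rho_1(x)u+c(x)(m),\,\rho_2(x)m\bigr)$ realizes any prescribed cocycle. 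One place where your write-up is actually more careful than the lemma as stated: the bijection with $H^1$ holds for \emph{equivalence classes} of extensions, i.e. isomorphisms $M\to M'$ that restrict to the identity on $M_1$ and induce the identity on $M_2$ --- exactly the relation you track through the off-diagonal part $\theta$. Arbitrary module isomorphisms give a coarser relation (for instance, rescaling $M_1$ by $\lambda$ carries the class $[c]$ to $[\lambda c]$ while leaving the middle module unchanged up to isomorphism), so the paper's phrase ``isomorphism classes'' must be read in your stricter sense for the statement to be literally true; this is also the sense in which the paper uses the lemma, since its equivalence of cocycles is precisely ``differ by a $1$-coboundary.''
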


For $W_1$, $\tau(k,m)$ is a 1-cocycle of the cohomology of $W_1$ with coefficients in the module Hom$(T(\beta, \gamma),T(\alpha, \gamma))$.
There is a natural equivalence relation on this set of 1-cocycles where two cocycles are equivalent if they differ by a 1-coboundary. In this way, we obtain a one-to-one correspondence between equivalence classes of 1-cocycles and equivalence classes of extensions $M$.

\subsection{Lifting the module extension into the setting of $AW_1$-modules}

Our final goal is to obtain a basis for $M$ such that our cocycles will be polynomials in almost all cases. By this, we mean we want to find $\widehat{w}_k \in M$ such that each $\widehat{w}_k$ is the preimage of $w_k \in T(\beta,\gamma)$ that admit polynomial cocycles $\tau$. The first thing to do is to lift our module extension into the setting of $AW_1$-modules. To do this, we make use of the $A$-cover $\widehat{M}$ of $M$.

From Theorem 4.10 of \ref{W_nmodules}, the $A$-cover of $M$ is cuspidal, so $\widehat{M}$ has finite dimensional weight spaces. We have the map $\pi: \widehat{M} \rightarrow M$ such that $\pi (\phi(x,u)) = \phi(x,u)(1) = x. u$. Define
 \[ \
 \widehat{M}(\alpha,\gamma) = \left\lbrace \sum_i \phi (x_i, u_i) | \forall f \in A, \sum_i (fx_i)u_i \in T(\alpha,\gamma) \right\rbrace. 
 \]
 and define the map $\widehat{\pi}: \widehat{M} \rightarrow \widehat{T(\beta,\gamma)}$ by $\widehat{\pi}(\phi(x,u)) = \phi(x,\overline{u})$, where $\overline{u}$ is the image of $u$ under the surjection that sends $M$ onto $T(\beta,\gamma)$.

\begin{lemma}\label{pihat}
$\widehat{\pi}$ is an homomorphism of $AW_1$-modules and $\widehat{M}/{ \widehat{M}(\alpha,\gamma)}\cong \widehat{T(\beta,\gamma)}$.
\end{lemma}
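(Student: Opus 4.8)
The plan is to recognize that $\widehat{\pi}$ is simply post-composition with the defining surjection $\overline{\cdot}\colon M\to T(\beta,\gamma)$, and then to obtain the isomorphism from the first isomorphism theorem for $AW_1$-modules once surjectivity and the kernel are identified. Throughout I would use only two facts about $\overline{\cdot}$: that it is a $W_1$-module map, and that it is surjective with kernel $T(\alpha,\gamma)$, as recorded in the short exact sequence.

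First I would unwind the generators. Since $A$ acts on $W_1=\operatorname{Der}(A)$ with $fx\in W_1$ for all $f\in A$, $x\in W_1$, and since $\overline{\cdot}$ is $W_1$-linear, we have $\overline{(fx)u}=(fx)\overline{u}$, so that for every $f\in A$,
\[
\overline{\phi(x,u)(f)}=\overline{(fx)u}=(fx)\overline{u}=\phi(x,\overline{u})(f).
\]
Thus $\widehat{\pi}(\phi(x,u))=\overline{\cdot}\circ\phi(x,u)$, and by linearity $\widehat{\pi}$ is the restriction to $\widehat{M}$ of the map $\operatorname{Hom}(A,M)\to\operatorname{Hom}(A,T(\beta,\gamma))$, $\psi\mapsto\overline{\cdot}\circ\psi$. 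This presentation makes well-definedness immediate: the value does not depend on how an element of $\widehat{M}$ is expressed as a combination of the symbols $\phi(x,u)$. The image lands in $\widehat{T(\beta,\gamma)}$ because each generator $\phi(x,u)$ is sent to the generator $\phi(x,\overline{u})$.

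Next I would verify compatibility with both actions, again using only that $\overline{\cdot}$ commutes with $W_1$. For the $W_1$-action, applying the defining relations \eqref{Acover} together with $\overline{yu}=y\overline{u}$ gives
\[
\widehat{\pi}\bigl(y\phi(x,u)\bigr)=\phi([y,x],\overline{u})+\phi(x,y\overline{u})=y\,\widehat{\pi}(\phi(x,u)),
\]
and for the $A$-action $\widehat{\pi}\bigl(g\phi(x,u)\bigr)=\phi(gx,\overline{u})=g\,\widehat{\pi}(\phi(x,u))$. Hence $\widehat{\pi}$ is a homomorphism of $AW_1$-modules. For the isomorphism I would then apply the first isomorphism theorem. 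Surjectivity is clear: as $\overline{\cdot}$ is onto, every generator $\phi(x,w)$ of $\widehat{T(\beta,\gamma)}$ equals $\widehat{\pi}(\phi(x,u))$ for any $u$ with $\overline{u}=w$. For the kernel, an element $\psi=\sum_i\phi(x_i,u_i)$ lies in $\ker\widehat{\pi}$ exactly when $\overline{\cdot}\circ\psi=0$, i.e. $\overline{\psi(f)}=0$ for all $f\in A$; since $\ker(\overline{\cdot})=T(\alpha,\gamma)$, this reads $\psi(f)=\sum_i(fx_i)u_i\in T(\alpha,\gamma)$ for all $f$, which is precisely the defining condition of $\widehat{M}(\alpha,\gamma)$. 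Therefore $\ker\widehat{\pi}=\widehat{M}(\alpha,\gamma)$ and $\widehat{M}/\widehat{M}(\alpha,\gamma)\cong\widehat{T(\beta,\gamma)}$.

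The verifications are all routine; the only point requiring genuine care is the well-definedness of $\widehat{\pi}$ on $\widehat{M}$, since the symbols $\phi(x,u)$ satisfy nontrivial linear relations and $\widehat{\pi}$ is prescribed on generators. Recasting $\widehat{\pi}$ as post-composition with $\overline{\cdot}$ inside $\operatorname{Hom}(A,M)$ is exactly what removes this difficulty, so I expect no real obstacle beyond being careful to work through that realization of $\widehat{M}$.
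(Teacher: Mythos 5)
Your proof is correct, and its computational core --- verifying equivariance on the generators $\phi(x,u)$ and reading off $\ker\widehat{\pi}=\widehat{M}(\alpha,\gamma)$ directly from the definitions --- is the same as the paper's. The genuine difference is your opening reformulation: you realize $\widehat{\pi}$ as the restriction to $\widehat{M}$ of post-composition $\psi\mapsto\overline{\cdot}\circ\psi$ on $\operatorname{Hom}(A,M)$, via the identity $\overline{\phi(x,u)(f)}=\overline{(fx)u}=(fx)\overline{u}=\phi(x,\overline{u})(f)$. The paper prescribes $\widehat{\pi}$ only on the spanning symbols $\phi(x,u)$ and never checks that this prescription is compatible with the linear relations those symbols satisfy inside $\operatorname{Hom}(A,M)$; your framing makes well-definedness automatic, and it also renders the kernel identification transparent ($\overline{\cdot}\circ\psi=0$ iff $\psi(f)\in T(\alpha,\gamma)=\ker(\overline{\cdot})$ for all $f\in A$, which is verbatim the defining condition of $\widehat{M}(\alpha,\gamma)$). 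Conversely, the paper is more economical where you are explicit: it observes that $W_1$-equivariance is inherited from $\overline{\cdot}$ being a map of $W_1$-modules, so only the $A$-action needs checking, and it dismisses the second claim as clear, whereas you spell out surjectivity onto the generators $\phi(x,\overline{u})$ and invoke the first isomorphism theorem for $AW_1$-modules. Net effect: your version is slightly longer but closes the one point (well-definedness of a map defined on a spanning set) that the paper leaves implicit; the paper's version is faster for a reader willing to grant that point.
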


\begin{proof}

For the first part, as $M \rightarrow T(\beta,\gamma)$ is a homomorphism of $W_1$-modules, then it is enough to show that $\widehat{\pi}$ is a homomorphism of $A$-modules. For $f \in A, x \in W_1, u \in M$,
\[
 f \widehat{\pi}(\phi(x,u)) = f \phi(x,\bar{u}) = \phi(fx, \bar{u}) = \widehat{\pi} (\phi(fx, u))= \widehat{\pi}(f \phi(x,u) )
\]
so that the $A$-action is preserved by $\widehat{\pi}$.

The second part is clear, as $\widehat{M}(\alpha, \gamma)$ is defined exactly as it needs to be the kernel of this map.
\end{proof}

It follows from this lemma that $\widehat{M}(\alpha,\gamma)$ is an $AW_1$-module as it is the kernel of a homomorphism of $AW_1$-modules. We obtain the short exact sequence 
\[
0 \rightarrow \widehat{M}(\alpha,\gamma) \rightarrow \widehat{M} \rightarrow \widehat{T(\beta,\gamma)} \rightarrow 0
\]
This extension turns out to be too large; instead we will need to find a submodule  $\widetilde{M}$ of $\widehat{M}$ that will admit a short exact sequence with $T(\beta,\gamma)$;
the next two lemmas will help us relate $\widehat{T(\beta,\gamma)}$ to $T(\beta,\gamma)$. Define $\varepsilon_j$ and $\eta_j$ in Hom$(A, T(\beta, \gamma))$ by
\[
\varepsilon_j (t^m) = w_{m+j}, \, \, \,
\eta_j (t^m) = (m+j) w_{m+j}, \, \, \, j \in \gamma + \mathbb{Z}
\]

\begin{lemma}\label{epsilon_j}
The $A$-cover $\widehat{T(\beta, \gamma)}$ is spanned by the vectors $\varepsilon_j, \eta_j$ for $j\in \gamma + \mathbb{Z}$ when $\beta \neq 1$, and $\beta \neq 0$. In the special cases of $\beta=0$ or $\beta =1$, $\widehat{T(\beta, \gamma)}$ is spanned by vectors $\varepsilon_j$ or $\eta_j$ respectively. 
\end{lemma}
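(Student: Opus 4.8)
The plan is to reduce the computation of $\widehat{T(\beta,\gamma)}$ to an explicit evaluation of its generators $\phi(e_k,w_n)$ on the monomial basis of $A$, and then to read off which of the proposed vectors $\varepsilon_j,\eta_j$ they produce. The first observation I would record is that $\phi(x,u)$ is bilinear in $(x,u)$: it is linear in $x$ because $f\mapsto fx$ is, and linear in $u$ because the $W_1$-action is linear. Consequently every element of the cover is a combination of the vectors $\phi(e_k,w_n)$ with $k\in\mathbb{Z}$ and $n\in\gamma+\mathbb{Z}$, so that $\widehat{T(\beta,\gamma)}=\mathrm{span}\{\phi(e_k,w_n)\}$ and it suffices to understand these.

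Next I would evaluate a single generator. Using $t^m e_k=e_{m+k}$ together with the tensor-module action, one gets
\[
\phi(e_k,w_n)(t^m)=(t^m e_k)(w_n)=e_{m+k}(w_n)=\bigl(n+\beta(m+k)\bigr)w_{m+n+k}.
\]
Writing $j=n+k$, so that the output index is $m+j$, and comparing with $\varepsilon_j(t^m)=w_{m+j}$ and $\eta_j(t^m)=(m+j)w_{m+j}$, a short matching of the coefficients of $1$ and $m$ yields the key identity
\[
\phi(e_k,w_n)=n(1-\beta)\,\varepsilon_{n+k}+\beta\,\eta_{n+k}.
\]
This immediately gives the inclusion $\widehat{T(\beta,\gamma)}\subseteq\mathrm{span}\{\varepsilon_j,\eta_j\}$; the substance of the lemma is the reverse inclusion together with the case distinction in $\beta$.

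For the reverse inclusion I would fix a target index $j\in\gamma+\mathbb{Z}$ and examine the span of $\{\,n(1-\beta)\varepsilon_j+\beta\eta_j : n\in\gamma+\mathbb{Z}\,\}$, obtained by letting $n$ vary and setting $k=j-n$. When $\beta\neq 0,1$ the coefficient $1-\beta$ is nonzero, so subtracting the combinations for two distinct values $n_1\neq n_2$ produces a nonzero multiple of $\varepsilon_j$; feeding this back recovers $\beta\eta_j$, hence $\eta_j$ since $\beta\neq 0$. Thus both $\varepsilon_j$ and $\eta_j$ lie in the cover, proving the generic case. In the degenerate cases the identity collapses: for $\beta=0$ it reads $\phi(e_k,w_n)=n\,\varepsilon_{n+k}$ and, choosing $n\neq 0$, leaves only the $\varepsilon_j$; for $\beta=1$ it reads $\phi(e_k,w_n)=\eta_{n+k}$ and leaves only the $\eta_j$.

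The main obstacle --- indeed essentially the only delicate point --- is to justify the reverse inclusion properly rather than merely noting that the generators happen to be combinations of $\varepsilon_j$ and $\eta_j$: I must extract, from within the span of the actual generators, enough independent combinations to recover each claimed spanning vector. This is exactly where the hypotheses $\beta\neq 0$ and $\beta\neq 1$ enter, through the two-parameter freedom in choosing $(n,k)$ subject to $n+k=j$, and it is what forces the split into the three regimes. Everything beyond the key identity is routine linear bookkeeping.
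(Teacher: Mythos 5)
Your proof is correct and follows essentially the same route as the paper's: both hinge on the identity $\phi(e_k,w_n)=n(1-\beta)\,\varepsilon_{n+k}+\beta\,\eta_{n+k}$ and then invert it by taking differences of generators with the same index $n+k$ (the paper uses the specific combination $\frac{1}{1-\beta}\bigl(\phi(e_0,w_j)-\phi(e_1,w_{j-1})\bigr)$ to extract $\varepsilon_j$, then solves for $\eta_j$ when $\beta\neq 0$). Your write-up is in fact slightly more complete, since you also spell out the inclusion of the cover into $\mathrm{span}\{\varepsilon_j,\eta_j\}$ and treat the degenerate cases $\beta=0$ and $\beta=1$ explicitly, which the paper leaves implicit.
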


\begin{proof}
The action of $\phi(e_k, w_j)$ on $t^m$ is given by:
\begin{equation}\label{etaepsilon}
\phi(e_k, w_j)(t^m) = \beta \eta_{j+k}(t^m) + (1-\beta) j \varepsilon_{j+k}(t^m)
\end{equation}

For $\beta \neq 1$, consider
\[\dfrac{1}{1-\beta}\left( \phi(e_0, w_j ) - \phi(e_1, w_{j-1}) \right) \in \widehat{T(\beta, \gamma)}
\] 
 where $w_{j}, w_{j-1} \in T(\beta, \gamma), j \in \gamma + \mathbb{Z}$. Then this element maps $t^m$ to $w_{j+m}$  
so that, as long as $\beta \neq 1$, $\varepsilon_{j} \in \widehat{T(\beta, \gamma)}$ for all $j \in \gamma + \mathbb{Z}$.

For $\eta_j$, if $\beta \neq 0$, $\eta_{j} = \frac{1}{\beta} \left( \phi(e_k, w_{j-k})- (1-\beta) (j-k) \varepsilon_{j} \right)$ and so $\eta_{j} \in \widehat{T(\beta, \gamma)}$ as long as $\beta \neq 0 $.
\end{proof}

\begin{remark}
Here, $\widehat{T(\beta, \gamma)}$ is a module extension of $T(\beta -1, \gamma)$ with $T(\beta, \gamma)$ when $\beta$ is not $0$ or $1$. From the $W_1$-action on these basis vectors (calculated in (\ref{calculations}) and (\ref{cal2})) and the results of this paper, the cocycle of this extension is trivial so that $\widehat{T(\beta, \gamma)} \cong T(\beta, \gamma) \oplus T(\beta -1, \gamma)$.
\end{remark}

\begin{lemma}
Let $\beta \neq 1$. The subspace of $\widehat{T(\beta, \gamma)}$ spanned by $\lbrace \varepsilon_i | i \in \gamma+ \mathbb{Z} \rbrace$ is isomorphic to $T(\beta, \gamma)$.
\end{lemma}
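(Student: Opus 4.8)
The plan is to pin down the $W_1$-module structure on the $\varepsilon_i$ directly and show it reproduces the tensor-module action, so that $\varepsilon_j \mapsto w_j$ becomes the desired isomorphism. First I would observe that the hypothesis $\beta \neq 1$ is precisely what Lemma~\ref{epsilon_j} needs to guarantee that each $\varepsilon_i$ genuinely lies in $\widehat{T(\beta,\gamma)}$; hence $E := \operatorname{span}\{\varepsilon_i \mid i \in \gamma + \mathbb{Z}\}$ is a well-defined subspace of the $A$-cover, and the remaining task is purely to identify its $W_1$-action.

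The key step is the computation of $e_k \varepsilon_j$. Since $\varepsilon_j$ is an element of $\operatorname{Hom}(A, T(\beta,\gamma))$, its $W_1$-action is the one in (\ref{homAM}), which I would combine with the action of $e_k$ on $A$, namely $e_k(t^m) = t^{k+1}\tfrac{d}{dt}t^m = m\,t^{m+k}$. Evaluating on $t^m$ gives
\begin{align*}
(e_k \varepsilon_j)(t^m) &= e_k\bigl(\varepsilon_j(t^m)\bigr) - \varepsilon_j\bigl(e_k(t^m)\bigr) \\
&= e_k(w_{m+j}) - m\,\varepsilon_j(t^{m+k}) \\
&= (m+j+\beta k)\,w_{m+j+k} - m\,w_{m+j+k} \\
&= (j + \beta k)\,\varepsilon_{j+k}(t^m).
\end{align*}
As this holds for every $m \in \gamma + \mathbb{Z}$, I would conclude $e_k \varepsilon_j = (j+\beta k)\varepsilon_{j+k}$. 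Note that this identity never uses $\beta \neq 1$; that hypothesis enters only to place the $\varepsilon_i$ inside the $A$-cover.

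This formula immediately shows that $E$ is stable under $W_1$, hence a $W_1$-submodule of $\widehat{T(\beta,\gamma)}$. To finish, I would check that the $\varepsilon_i$ are linearly independent in $\operatorname{Hom}(A,T(\beta,\gamma))$: evaluating any relation $\sum_i c_i \varepsilon_i = 0$ at $t^0$ yields $\sum_i c_i w_i = 0$, so all $c_i = 0$. Therefore $\varepsilon_j \mapsto w_j$ defines a linear bijection $E \to T(\beta,\gamma)$, and the computation above shows it intertwines the actions, since $e_k w_j = (j+\beta k)w_{j+k}$ is exactly the tensor-module action. Hence $E \cong T(\beta,\gamma)$ as $W_1$-modules.

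I do not anticipate a serious obstacle here, as the argument reduces to one short verification. The only points that require care are confirming that the $W_1$-action inherited from $\widehat{T(\beta,\gamma)}$ agrees with the $\operatorname{Hom}(A,T(\beta,\gamma))$-action used above—which it does, because the $A$-cover is by construction a subspace of $\operatorname{Hom}(A,T(\beta,\gamma))$—and keeping precise track of where the hypothesis $\beta \neq 1$ is genuinely needed.
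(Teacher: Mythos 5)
Your proposal is correct and follows essentially the same route as the paper: both compute the action $e_k\varepsilon_m = (m+\beta k)\varepsilon_{m+k}$ to see that the span of the $\varepsilon_i$ is a $W_1$-submodule, and then identify it with $T(\beta,\gamma)$ via $\varepsilon_i \mapsto w_i$, with $\beta \neq 1$ used only to ensure the $\varepsilon_i$ lie in $\widehat{T(\beta,\gamma)}$. Your explicit linear-independence check (evaluating at $t^0$) simply fills in a detail the paper leaves implicit.
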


\begin{proof}
The action of $W_1$ on $\varepsilon_m$ is given by $ e_k  \varepsilon_m = (m+\beta k) \varepsilon_{m+k}$ 
so that $\lbrace \varepsilon_i | i \in \gamma + \mathbb{Z} \rbrace$ is a submodule of $\widehat{T(\beta, \gamma)}$. Then, as long as $\beta \neq 1$, the map $\varepsilon_i \rightarrow w_i$ is surjective and the lemma follows.
\end{proof}

Consequently, we will denote span$\lbrace \varepsilon_j | j \in \mathbb{Z} \rbrace$ by $T(\beta,\gamma)$, and view $T(\beta,\gamma)$ as a subspace of $\widehat{T(\beta,\gamma)}$ when $\beta \neq 1$.

Now define $\widetilde{M} = \widehat{\pi} ^{-1}(T(\beta,\gamma))$. $\widetilde{M}$ is an $AW_1$-module containing $\widehat{M}(\alpha, \gamma)$. We now determine $\widetilde{M}/\widehat{M}(\alpha, \gamma)$ to write $\widetilde{M}$ as a module extension; this is the appropriate setting in which to view our cocycles. 

\begin{lemma}
 $\pi: \widetilde{M} \rightarrow M$ is surjective when $\beta \neq 1$.
\end{lemma}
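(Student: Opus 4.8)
The plan is to produce, for each basis vector $v_m$ and $w_m$ of $M$, an explicit preimage lying in $\widetilde M$. Since $\pi$ is a homomorphism of $W_1$-modules, $\pi(\widetilde M)$ is a $W_1$-submodule of $M$, so it is enough to check that every $v_m$ and every $w_m$ lies in $\pi(\widetilde M)$.

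For the quotient part I would borrow the combination used in the proof of Lemma \ref{epsilon_j}. Since $\beta \neq 1$, set
\[ \widehat w_j := \frac{1}{1-\beta}\bigl(\phi(e_0,w_j)-\phi(e_1,w_{j-1})\bigr)\in\widehat M. \]
Applying $\widehat\pi$ and using (\ref{etaepsilon}), the two $\eta_j$-contributions cancel and one is left with $\widehat\pi(\widehat w_j)=\varepsilon_j\in T(\beta,\gamma)$; hence $\widehat w_j\in\widetilde M$. Evaluating at $1$ and using the action (\ref{basicaction}) gives
\[ \pi(\widehat w_j)=\frac{1}{1-\beta}\bigl(e_0 w_j-e_1 w_{j-1}\bigr)=w_j+\frac{\tau(0,j)-\tau(1,j-1)}{1-\beta}\,v_j, \]
so $\pi(\widehat w_j)\equiv w_j \pmod{T(\alpha,\gamma)}$. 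In particular $\pi(\widetilde M)$ surjects onto the quotient $T(\beta,\gamma)$, which already yields $\pi(\widetilde M)+T(\alpha,\gamma)=M$.

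It remains to show $T(\alpha,\gamma)\subseteq\pi(\widetilde M)$. Here the key observation is that $\widehat M(\alpha,\gamma)=\ker\widehat\pi$ is contained in $\widetilde M=\widehat\pi^{-1}(T(\beta,\gamma))$. For $v_j\in T(\alpha,\gamma)$ and any $k$, the element $\phi(e_k,v_j)$ lies in $\widehat M(\alpha,\gamma)$, because $(t^m e_k)v_j=e_{m+k}v_j\in T(\alpha,\gamma)$ for all $m$, and $\pi(\phi(e_k,v_j))=e_k v_j=(j+\alpha k)v_{j+k}$. Choosing $k,j$ with $j+\alpha k\neq 0$ thus places each $v_m$ in $\pi(\widetilde M)$; equivalently, $\pi(\widetilde M)\cap T(\alpha,\gamma)$ is a nonzero $W_1$-submodule of $T(\alpha,\gamma)$, so it equals all of $T(\alpha,\gamma)$ whenever the latter is irreducible. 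Combined with the previous paragraph this gives $\pi(\widetilde M)=M$.

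The one place requiring real care is the recovery of $v_0$ in the degenerate parameters $\alpha\in\{0,1\}$, $\gamma\in\mathbb Z$, where $T(\alpha,\gamma)$ is reducible and $v_0$ is not of the form $(j+\alpha k)v_{j+k}$ with $j+k=0$; there one must fall back on the standing assumption $W_1 M=M$ (already needed for $\pi\colon\widehat M\to M$ to be surjective by Proposition 4.5 of \ref{W_nmodules}) or treat $v_0$ by hand. Otherwise the argument is mechanical, and the only essential use of the hypothesis is the division by $1-\beta$ in forming $\widehat w_j$.
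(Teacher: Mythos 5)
Your handling of the generic case is correct and is essentially the paper's own argument in a cleaner packaging: your $\widehat w_j$ is exactly the paper's $\sigma_0$ from (\ref{sigdef}) (with general $j$ in place of $\gamma$), and your observation that $\phi(e_k,v_j)\in\ker\widehat\pi\subseteq\widetilde M$ with $\pi(\phi(e_k,v_j))=(j+\alpha k)v_{j+k}$ is the same computation the paper performs when it reduces surjectivity onto the submodule to the identity $W_1T(\alpha,\gamma)=T(\alpha,\gamma)$. Up to that point the two proofs agree.

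The problem is the degenerate case, which you have both misidentified and not actually closed, and which is precisely what the entire second half of the paper's proof is about. First, the only degenerate case is $\alpha=1$, $\gamma\in\mathbb Z$: for $\alpha=0$ one has $e_{-1}v_1=v_0$, so your own construction already recovers $v_0$; it is only for $\alpha=1$ that $e_kv_{-k}=k(\alpha-1)v_0=0$ kills every element of this form. Second, neither of your fallbacks works. The condition $W_1M=M$ is nowhere a standing assumption of the paper, and it genuinely fails for some of the modules $M$ covered by the lemma: for the split extension $M=T(1,0)\oplus T(\beta,0)$ one computes $W_1M=T^\circ(1)\oplus T(\beta,0)$, which misses $v_0$, and since $\pi(\widehat M)=W_1M\supseteq\pi(\widetilde M)$, no argument that fixes $M$ and its basis can place $v_0$ in the image there. (This also shows the fallback would be insufficient even when $W_1M=M$ holds, since Proposition 4.5 of \ref{W_nmodules} gives surjectivity of $\pi$ from $\widehat M$, not from the smaller module $\widetilde M$.) The paper's proof of this case (written there as $\alpha=0$, apparently a typo for $\alpha=1$) is instead a cocycle-level argument: it uses the elements $\phi(e_k,w_{-k})$, whose $\pi$-images $k(\beta-1)w_0+\tau(k,-k)v_0$ involve the cocycle, combines them with preimages of $w_0$ obtained from surjectivity onto the quotient, and, when $\tau(k,-k)\equiv0$, first replaces $\tau$ by an equivalent cocycle by adding the coboundary $(\beta-1)kv_{m+k}$ — that is, it works up to equivalence of extensions rather than with a fixed basis. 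Some version of this engagement with $\tau$ is the missing idea in your proposal; "treating $v_0$ by hand" cannot substitute for it, because for certain cocycle classes $v_0$ simply is not in the image.
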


\begin{proof}
 Recall that $ \pi: \widehat{T(\alpha,\gamma)} \rightarrow T(\alpha,\gamma)$ will be surjective if $W_1 T(\alpha,\gamma) = T(\alpha,\gamma)$.
 Notice $\frac{1}{k}e_0v_k =v_k$ for $k \neq 0$, so that this map is surjective when $\gamma \neq 0$. For $v_0 \in W_1 T(\alpha,0)$, $ \frac{1}{(-s+\alpha s)}e_s v_{-s} \in W_1 T(\alpha,0)$ which happens only if $\alpha \neq 1$. Thus, this map is surjective as long as $\alpha \neq 1$ or $\gamma \notin \mathbb{Z}$:

For $\pi: \widehat{T(\alpha,\gamma)} \rightarrow T(\alpha,\gamma)$ is surjective, we can extend to the map $\widehat{M}(\alpha, \gamma) \rightarrow T(\alpha,\gamma)$, which is also surjective. It is left to show that $\widehat{\pi}: \widetilde{M}/\widehat{M}(\alpha,\gamma) \rightarrow T(\beta,\gamma)$ is surjective.

By Lemma \ref{pihat}, $\widehat{\pi}: \widehat{M} \rightarrow \widehat{T(\beta, \gamma)}$ is surjective. As $T(\beta, \gamma) \subseteq \widehat{T(\beta, \gamma)}$ for $\beta \neq 1$ and $\widetilde{M} = \widehat{\pi}^{-1}(T(\beta, \gamma))$, then it follows that $\widehat{\pi}:\widetilde{M} \rightarrow T(\beta, \gamma)$ is surjective. 

Now, consider the case that $\alpha =0, \gamma \in \mathbb{Z}$ and $\beta \neq 1$. The element $\phi(e_k,w_{-k})$ is mapped to $k(\beta -1) w_0 + \tau (k,-k) v_0$. If $\tau(k,-k) \neq 0$ for some $k  \in \mathbb{Z}$, then, as $\pi$ is surjective onto $T(\beta, 0)$, there exists some $\sum_i \phi(x_i, u_i) \in \widetilde{M}$ such that $\pi \left( \sum_i \phi(x_i, u_i ) \right) = w_0$ so that $\pi \left( \frac{1}{\tau(k,-k)} \phi(e_k, w_{-k}) + \sum_i \phi(x_i, u_i) \right) = v_0$.

If $\tau(k,-k) =0$ for every $k \in \mathbb{Z}$, then we can obtain an equivalent cocycle by adding the coboundary $\tau(k,m) = (\beta -1) k v_0$, which is exactly the coboundary given by $-(e_k \varphi)(w_m)$ where $\varphi(w_i) = v_i$. Then this equivalent cocycle is nonzero for $k \neq 0$ so that we may apply the above argument.
\end{proof}

Now, for $\beta \neq 1$, we have the short exact sequence 
\[
 0 \rightarrow \widehat{M}(\alpha,\gamma) \rightarrow \widetilde{M} \rightarrow T(\beta,\gamma) \rightarrow 0
\]
All these modules are cuspidal; this follows as $\widehat{M}(\alpha,\gamma)$ and $ \widetilde{M}$ are $AW_1$-submodules of $ \widehat{M}$. 

The next proposition will be very important in showing that for the generic case, 1-cocycles are polynomial functions. First, we introduce the maps in Hom$(A, T(\alpha, \gamma))$ for $k \in \gamma + \mathbb{Z}, i \in \mathbb{Z}_+$, 
\[
 \theta_k^{(i)} : t^m \rightarrow \dfrac{(m+k)^i}{i!} v_{m+k}, \, \, \, \, \, 
\delta_{k}: t^m \rightarrow 
 \begin{cases}
  v_{m+k}, & m=-k \\
  0, & m \neq -k \\
 \end{cases}
\]

Notice the $A$-action on $\theta_m^{(i)}$ is given by $t^m \theta_k^{(i)} = \theta_{k+m}^{(i)}$.

\begin{remark}
When the quotient module $T(\beta,\gamma)$ is isomorphic to the submodule $T(\alpha,\gamma)$, then the maps $\theta_k^{(0)}=\varepsilon_k$ and $\theta_k^{(1)}=\eta_k$. We make the distinction between these functions for convenience of notation later on. 
\end{remark}

\begin{proposition}\label{Prop} Any $AW_1$-submodule in Hom(A,T($\alpha,\gamma$)) with finite dimensional weight spaces is contained in a submodule spanned by:
\begin{enumerate}
 \item $\left\lbrace \theta_k^{(0)}, \cdots , \theta_k^{(N)} | k \in \gamma + \mathbb{Z} \right\rbrace$ for some $N \in \mathbb{N}$, when $\alpha \neq 0$,
 \item  $\left\lbrace \theta_k^{(0)}, \cdots , \theta_k^{(N)}, \delta_{k} | k \in \gamma + \mathbb{Z} \right\rbrace$ for some $N \in \mathbb{N}$, when $\alpha =0$.
\end{enumerate}
\end{proposition}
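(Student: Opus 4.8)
The plan is to reduce the statement to a problem about scalar sequences and then to analyze a single matrix recursion. First I would fix the weight grading: an element $\psi\in\mathrm{Hom}(A,T(\alpha,\gamma))$ of weight $\lambda$ must satisfy $\psi(t^m)=a_m v_{m+\lambda}$ for a sequence $(a_m)_{m\in\mathbb{Z}}$, since the eigenvalue equation $e_0\psi=\lambda\psi$ forces $\psi(t^m)\in\mathbb{C}v_{m+\lambda}$. A direct computation from the definition $(x\psi)(f)=x(\psi(f))-\psi(x(f))$ then gives the two structural formulas
\[
(t^n\psi)(t^m)=a_{m+n}v_{m+n+\lambda},\qquad (e_s\psi)(t^m)=\big[(m+\lambda+\alpha s)a_m-m\,a_{m+s}\big]v_{m+\lambda+s}.
\]
In this language the proposed generators are transparent: $\theta_k^{(i)}$ is the sequence $a_m=(m+k)^i/i!$, a polynomial of degree $i$ in $m$, while $\delta_k$ is the sequence equal to $1$ at $m=-k$ and $0$ otherwise, a single point mass. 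So the proposition asserts exactly that the sequences occurring in a submodule $U$ are polynomial of bounded degree, together with (only when $\alpha=0$) the point mass at $m=-k$.

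Next I would extract the rigidity coming from finite-dimensional weight spaces. Because $t$ and $t^{-1}$ act as mutually inverse maps $U_\lambda\to U_{\lambda\pm1}$, every weight space has the same finite dimension $d$, and $U$ is free of rank $d$ over $A$; I may therefore choose an $A$-basis $u_1,\dots,u_d$ lying in one weight space $U_\mu$. Since $e_s$ carries $U_\mu$ into $U_{\mu+s}=t^sU_\mu$, there are constant matrices $C^{(s)}=(c^{(s)}_{jr})$ with $e_su_j=\sum_r c^{(s)}_{jr}\,t^su_r$. Writing $\mathbf{a}(m)$ for the vector whose $j$-th entry is the sequence attached to $u_j$ and equating the two expressions above for $e_su_j$ yields the governing recursion
\[
(m+\mu+\alpha s)\,\mathbf{a}(m)=\big(mI+C^{(s)}\big)\,\mathbf{a}(m+s),\qquad m,s\in\mathbb{Z}.
\]
The Jacobi identity for $W_1$ translates into a cocycle relation for the family $\{C^{(s)}\}$, namely $C^{(0)}=\mu I$ and
\[
(s'-s)\,C^{(s+s')}=s'\,C^{(s')}-s\,C^{(s)}+[C^{(s)},C^{(s')}],
\]
which lets me reduce everything to the single matrix $C^{(1)}$.

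The heart of the argument is the analysis of this recursion. For all but finitely many $m$ both the scalar $m+\mu+\alpha s$ and the matrix $mI+C^{(s)}$ are invertible, so the recursion is nondegenerate and its solution space is controlled by the spectrum of $C^{(1)}$ relative to $\alpha$. I expect a filtration whose graded pieces behave like the tensor modules $T(\alpha-i,\gamma)$, with the degree-$i$ polynomial solutions corresponding precisely to $\theta_k^{(i)}$ and the degree bounded by $d-1$, which is the source of the integer $N$. The delicate behaviour occurs at the finitely many resonant integers where $mI+C^{(s)}$ or the scalar factor drops rank: when $\alpha=0$ (and $\gamma\in\mathbb{Z}$) the scalar is $m+\mu$, which vanishes at the single value $m=-\mu$ for every $s$ simultaneously; this decouples that one coordinate of the recursion and produces exactly the extra point-supported solution $\delta_k$. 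When $\alpha\neq0$ the vanishing locus $m=-\mu-\alpha s$ moves with $s$, no coordinate is simultaneously decoupled, and no $\delta$-type solution can survive, which is precisely the dichotomy in the statement.

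The main obstacle, and where I would concentrate the effort, is ruling out the non-polynomial solutions of the recursion. Solving the $s=1$ relation as a telescoping product shows that a priori the entries of $\mathbf{a}(m)$ are ratios of Gamma functions, which collapse to polynomials only when the relevant eigenvalue differences of $(\mu+\alpha)I-C^{(1)}$ lie in $\mathbb{Z}_{\ge0}$; finite-dimensionality of the weight spaces by itself does not exclude the remaining tensor-density solutions. To close this gap I would use that the submodules relevant to the classification do not arise arbitrarily but sit inside the $A$-cover, so that their $W_1$-composition factors are confined to $T(\alpha-i,\gamma)$ with $i\ge0$; this confines the spectrum of $C^{(1)}$ to $\{\alpha,\alpha-1,\dots\}$, forces the Gamma-ratios to truncate to polynomials, and leaves only the $\theta_k^{(i)}$ together with the $\delta_k$ in the $\alpha=0$ case. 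Establishing this spectral constraint, rather than relying on finite-dimensionality alone, is the crux of the proof.
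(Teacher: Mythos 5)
Your setup is sound and matches the paper's first step: passing to the scalar sequences $a(m)$ attached to weight vectors, the formula for $(e_s\psi)(t^m)$, the freeness of a cuspidal $AW_1$-module over $A$, and the resulting matrix recursion $(m+\mu+\alpha s)\,\mathbf{a}(m)=(mI+C^{(s)})\,\mathbf{a}(m+s)$ are all correct. But the proof stops exactly where Proposition \ref{Prop} acquires content. Everything after ``the heart of the argument'' is a list of expectations, not an argument: the filtration by tensor modules, the degree bound, and above all the exclusion of the Gamma-ratio solutions are asserted rather than proved, and you say so yourself. More seriously, the device you propose for closing the gap is off-statement: the proposition quantifies over \emph{all} $AW_1$-submodules of $\mathrm{Hom}(A,T(\alpha,\gamma))$ with finite-dimensional weight spaces, so you are not entitled to assume the module sits inside an $A$-cover; and the ``spectral confinement'' of $C^{(1)}$ to $\{\alpha,\alpha-1,\dots\}$ that you want to import is nowhere available --- it is a structural fact of the same depth as the proposition itself, for which you give no argument, so the route is circular.

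The paper closes precisely this gap using finite-dimensionality alone, by a mechanism that a one-step recursion analysis misses. For each $n$ it forms the weight-preserving operators $z_n=\sum_{i=0}^{n+1}(-1)^i\binom{n+1}{i}t^{i-1}e_{1-i}$ and $y_n=\sum_{i=0}^{n}(-1)^i\binom{n}{i}t^{i}e_{-i}$, which satisfy $z_n=z_{n-1}-y_n$ and $[z_{-1},z_n]=-nz_n$ (Lemma 4 of \ref{jets}). On an $\ell$-dimensional weight space each nonzero $z_n$ is therefore an eigenvector of $\mathrm{ad}\,z_{-1}$ with eigenvalue $-n$, and $\mathrm{ad}\,z_{-1}$ has at most $\ell^2$ eigenvalues, so $z_n=0$ for all large $n$; then $0=(z_n-y_{n+1})a$ collapses to $\alpha\sum_{i=0}^{n+1}(-1)^i\binom{n+1}{i}a(m+i-1)=0$. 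For $\alpha\neq0$ this kills all high-order finite differences of every sequence $a$, forcing polynomiality; so your claim that finite-dimensionality ``by itself does not exclude the remaining tensor-density solutions'' is exactly what the paper refutes in that case --- the alternating sums over all the $e_{1-i}$ are far more rigid than the single ratio $a(m+1)/a(m)$. For $\alpha=0$ the same argument applied to $d(m)=(m+k)a(m)$ produces the polynomial-plus-delta dichotomy. (Your instinct is not baseless: when $\alpha=0$ and $\gamma\notin\mathbb{Z}$, sequences such as $a(m)=(m+\lambda)^{-1}$ genuinely span rank-one $AW_1$-submodules with one-dimensional weight spaces, so the delta alternative in part (2) implicitly requires $\gamma\in\mathbb{Z}$, as the paper assumes whenever it uses that case; but your added hypotheses would not repair this either.) As written, your proposal is missing the paper's key idea, and the patch you outline would prove a different, weaker statement.
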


\begin{proof}
Suppose that $\varphi \in$ $\text{Hom}(A, T(\alpha,\gamma))$. Without loss of generality, suppose that $\varphi$ is an element of weight $k$ and $\varphi(t^m) = a_m v_{m+k}$. We will show that the function $a(m) = a_m$ is a polynomial in $m$ when $\alpha \neq 0$.

First, we consider $t^{-i}e_i\varphi \in \text{Hom}(A, T(\alpha,\gamma))$. In this notation, $t^{-i}e_i\varphi = t^{-i} (e_i \varphi)$ which is not the same as $(t^{-i} e_i) \varphi$. Since $e_i \varphi$ is an element with weight $k+i$, and $t^{-i}$ is an element of weight $-i$, $t^{-i}e_i\varphi$ is still an element of weight $k$ in $\text{Hom}(A, T(\alpha ,\gamma))$. Then $(t^{-i}e_i\varphi)(t^m) = b_m v_{m+k}=b(m) v_{m+k}$. The functions $a$ and $b$ are connected by the following relation.
\[
 b(m) = m(a(m-i) - a(m)) + (k-i - \alpha i) a(m-i) + i a(m)
\]
Define an action of $t^{-i}e_i$ on $a$ as 
\[
 (t^{-i}e_i a )(m)= m(a(m-i) - a(m)) + (k-i - \alpha i) a(m-i) + i a(m)
\]
and let $ z_n = \sum_{i=0}^{n+1} (-1)^i {n+1 \choose i} t^{i-1}e_{1-i}$ and $ y_n = \sum_{i=0}^{n} (-1)^i {n \choose i} t^i e_{-i}$. One can easily check that the following Lemma holds:

\begin{lemma}\label{z_n}
  $z_n = z_{n-1} - y_n$ for all $n \in \mathbb{N}$.
\end{lemma}

We can give an explicit form of $z_n$ and $y_n$, by
 simplifying using that $\sum_{i=0}^j (-1)^i {j \choose i} =0$ for $j\geq 1$ and $\sum_{i=0}^j (-1)^i {j \choose i} i=0$ for $j \geq 2$:
\begin{align*}
 (z_n a)(m)  = \sum_{i=0}^{n+1}& (-1)^i {n+1 \choose i} (m +k + (\alpha+1)(i-1) ) a(m+i-1) , \text{ for }n \geq 1
 \\
 (y_{n}a)(m) = \sum_{i=0}^{n}& (-1)^i {n \choose i} \left(  m +k+i + \alpha i \right) a(m+i) , \text{ for }n \geq 2
\end{align*}

Observe that $a$ is a function from $\gamma + \mathbb{Z}$ to $\mathbb{C}$. By assumption, $\varphi$ is contained in an $AW_1$-module with finite dimensional weight spaces. Hence $z_n$ in a linear operator on an $\ell$-dimensional space and $z_n \in M_{\ell \times \ell}(\mathbb{C})$.

By Lemma 4 of \ref{jets}, $[z_{-1}, z_n] = -nz_n$ for all $n\in \mathbb{N}$, which means that $z_n$ will be an eigenvector of $ad z_{-1}$. As $z_n \in M_{\ell \times \ell}$, $adz_{-1} \in M_{\ell^2 \times \ell^2} (\mathbb{C})$. Thus $adz_{-1}$ has at most $\ell^2$ unique eigenvalues, so that $z_n$ can only be non-zero for finitely many $n \in \mathbb{N}$. 

Let $\bar{n} = \text{max}\lbrace n \in \mathbb{N} | z_n \neq 0 \rbrace$ if this set is non-empty, and set $\bar{n}=1$ if the set is empty. Then $z_{n}=z_{n+1} =0$ for all $n > \bar{n}$, so $z_n - z_{n+1}$ is zero for all $n > \bar{n}$. It follows from the previous lemma that $y_{n+1}$ is zero for $n > \bar{n}$. We observe
\begin{equation}
\begin{aligned}
  (y_{n+1}a)(m-1) =  \sum_{i=0}^{n+1} (-1)^i {n+1 \choose i}& \left(  m +k +(\alpha +1)i -1 \right) a(m+i-1) 
  \end{aligned}
\end{equation}
and so by shifting the variable $m$ to $m-1$, $y_{n+1}$ is still the zero function. So
\begin{align*}
  (0 \cdot a)(m) = &((z_n -y_{n+1})\cdot a)(m) \\
 =&-\sum_{i=0}^{n+1} (-1)^i {n+1 \choose i} \alpha a(m+i-1)  
 \end{align*}
As long as $\alpha \neq 0$, 
\begin{equation}
 0=\sum_{i=0}^{n+1} (-1)^i {n+1 \choose i} a(m+i-1)  
\end{equation}
for all $n > \bar{n}$ and hence $\lbrace a_m \rbrace$ satisfy recurrence relations. This in turn tells us that each $a$ is a polynomial in $m$ by the use of Lemma 3 in \ref{jets}, so that $a(m)$ is in the span $(m+k)^i $ for $i=1, \dots, N$, where  where $N= \text{deg}(a)$. Thus $\varphi$ is in the span of $\left\lbrace \theta_m^{(0)}, \dots , \theta_m^{(N)} \right\rbrace$.

In the case that $\alpha=0$,
\[
 (z_n a)(m) =\sum_{i=0}^{n+1} (-1)^i {n+1 \choose i} ( m +i-1+k)a(m+i-1)
\]
Set $d(m)= (m+k) a(m)$ so that
\[
 (z_n a)(m) =\sum_{i=0}^{n+1} (-1)^i {n+1 \choose i}d(m+i-1)=0
\]
By the previous argument, $d$ is a polynomial in $m$ with a root at $-k$. So there exists a polynomial $g(m)$ such that $d(m) = (m+k)g(m)$. Thus, $a(m)$ and $g(m)$ agree on every integer except $-k$ so that
\begin{equation}
a(m) = g(m) + c \delta_{m,-k}
\end{equation}
for some $c \in \mathbb{C}$, where $g(m) = \frac{d(m)}{m+k}$ is a polynomial in $m$.

This suggests that $a$ may have a delta-function component. As 
\[
t^{\ell} \delta_k = \delta_{k+\ell},\hspace{1cm} e_\ell \delta_k = (k+\ell) \delta_{k+\ell}
\]
this indeed contained in an $AW_1$-submodule with finite dimensional weight spaces. 
\end{proof}

\subsection{Finding an appropriate basis for $M$}

For the rest of this section, we will assume that $\beta \neq 1$. From here, we would like to obtain a basis of $M$ that will admit polynomial cocycles. In particular, we would like to make use of the last proposition to show that we can obtain a basis of $M$ so that all possible 1-cocycles are contained in subspaces of the form described in the last proposition. 

Let $\mathscr{L}$ be the Lie algebra spanned by the elements $z^i \frac{d}{dz}$ where $i \in \mathbb{Z}$. Denote $\mathscr{L}_+$ to be the subalgebra spanned by the elements $z^i \frac{d}{dz}$ where $i \in \mathbb{Z}$ and $i \geq 1$.

\begin{theorem}[Theorem 4.11 of \ref{W_nmodules}]\label{theorem}
If $V$ is a cuspidal $AW_1$-module with weights in $\gamma + \mathbb{Z}$ for some $\gamma \in \mathbb{C}$, then there exists a finite dimensional module $(U, \rho)$ of $\mathscr{L}_+$ such that 
\[
V \cong A \otimes U
\]
and with $W_1$-action given by
\[
e_k (t^m \otimes u) = (m+\gamma)t^{m+k} \otimes u + \sum_{i=1}^\infty \dfrac{k^i}{i!}t^{m+k} \otimes \rho \left( z^i \frac{d}{dz} \right) u
\]
for all $k,m \in \mathbb{Z}, u \in U$.
\end{theorem}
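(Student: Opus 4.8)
The plan is to strip the abstract $AW_1$-structure down to finitely much linear algebra on a single weight space and then recognise that data as a representation of $\mathscr{L}_+$.

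\textbf{Freeness over $A$.} First I would show $V$ is a free $A$-module of finite rank. The compatibility axiom gives $e_0(t^m v)=(\lambda+m)t^m v$ for $v\in V_\lambda$, so $t^m$ maps $V_\lambda$ into $V_{\lambda+m}$; in particular $t\colon V_\lambda\to V_{\lambda+1}$ and $t^{-1}\colon V_{\lambda+1}\to V_\lambda$ are mutually inverse because $tt^{-1}=t^{-1}t=1$ acts as the identity. Hence all weight spaces are isomorphic, and cuspidality forces $\dim V_\lambda=\ell<\infty$ for a common $\ell$ and all $\lambda\in\gamma+\mathbb{Z}$. Putting $U=V_\gamma$, the multiplication map $A\otimes_{\mathbb{C}}U\to V$, $t^m\otimes u\mapsto t^m u$, is then an isomorphism of $A$-modules since $V=\bigoplus_m t^mV_\gamma$. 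This is the required identification $V\cong A\otimes U$; what remains is to compute the $W_1$-action in these coordinates.

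\textbf{Reduction to operators and the master relation.} For $u\in U=V_\gamma$ we have $e_ku\in V_{\gamma+k}=t^kU$, so there is a unique $P_k\in\operatorname{End}(U)$ with $e_ku=t^kP_ku$. Using the compatibility axiom and $e_k(t^m)=m\,t^{m+k}$, one gets $e_k(t^mu)=t^{m+k}(P_ku+mu)$, so the entire action is encoded by the family $\{P_k\}$. Evaluating $e_0$ on $U$ gives $P_0=\gamma\cdot\mathrm{id}$. Expanding $[e_k,e_s]=(s-k)e_{k+s}$ on $u\in U$ with this formula and comparing the coefficients of $t^{k+s}$ yields the single structural identity
\[
[P_k,P_s]+sP_s-kP_k=(s-k)P_{k+s}.
\]

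\textbf{Polynomiality — the crux.} The heart of the proof is that $k\mapsto P_k$ is polynomial in $k$ with coefficients in $\operatorname{End}(U)$. Cuspidality is exactly what makes this possible: $\operatorname{End}(U)\cong M_\ell(\mathbb{C})$ is finite dimensional, so the eigenvalue-counting argument used in the proof of Proposition~\ref{Prop} applies. Mimicking the operators $z_n,y_n$ and the relation $[z_{-1},z_n]=-nz_n$ of Lemma~4 of \ref{jets}, the fact that $\operatorname{ad}z_{-1}$ has only finitely many eigenvalues forces all but finitely many of the relevant operators to vanish; this turns the structural identity into a finite linear recurrence in $k$ for the scalar matrix entries of $P_k$, and by Lemma~3 of \ref{jets} each entry is a polynomial in $k$. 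Writing $P_k=\gamma\cdot\mathrm{id}+\sum_{i\ge1}\frac{k^i}{i!}\,\rho\!\left(z^i\frac{d}{dz}\right)$ then \emph{defines} the operators $\rho(z^i\frac{d}{dz})$ as rescaled Taylor coefficients; the sum is finite because $\deg_k P_k$ is bounded, equivalently because $\rho(z\frac{d}{dz})$ has bounded integer spectrum on $U$ while each $\rho(z^i\frac{d}{dz})$ with $i\ge2$ strictly shifts that spectrum and hence vanishes for large $i$.

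\textbf{The representation and conclusion.} Finally I would check that $\rho$ is a Lie algebra homomorphism $\mathscr{L}_+\to\operatorname{End}(U)$. Substituting the polynomial expansion of $P_k$ into the structural identity and matching coefficients of $k^as^b$ is a routine exponential-generating-function calculation that reproduces exactly the brackets $[\rho(z^i\frac{d}{dz}),\rho(z^j\frac{d}{dz})]=(j-i)\rho(z^{i+j-1}\frac{d}{dz})$ defining $\mathscr{L}_+$. Substituting the same expansion into $e_k(t^mu)=t^{m+k}(P_ku+mu)$ yields the displayed action and completes the identification $V\cong A\otimes U$. The one genuinely difficult point is the polynomiality of $P_k$; every other step is a structural reduction or a bookkeeping verification.
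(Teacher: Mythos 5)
A point of context first: the paper never proves this statement --- Theorem \ref{theorem} is imported verbatim from Theorem 4.11 of \ref{W_nmodules} --- so your proposal can only be compared with the standard jet-module argument of that source, which the paper itself imitates (in a different setting) in the proof of Proposition \ref{Prop}. Your skeleton is exactly that argument, and three of your four steps are correct and essentially complete: the freeness $V\cong A\otimes U$ with $U=V_\gamma$; the reduction to the operator family $P_k=(t^{-k}e_k)|_U$ with $P_0=\gamma\,\mathrm{id}$ and the master identity $[P_k,P_s]+sP_s-kP_k=(s-k)P_{k+s}$; and the final coefficient-matching, which really does reproduce $[\rho_i,\rho_j]=(j-i)\rho_{i+j-1}$ (the binomial identity that makes it work is $\binom{n}{i}-\binom{n}{i-1}=n!\,(j-i)/(i!\,j!)$ with $j=n+1-i$).

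The gap is in your crux step, and it is genuine. The eigenvector identity $[z_{-1},z_n]=-nz_n$ is special to differences taken over the window $\{1-n,\dots,0,1\}$: writing $(\Delta^m P)(k)=\sum_{i=0}^m(-1)^{m-i}\binom{m}{i}P_{k+i}$, the master identity gives $[P_1,(\Delta^mP)(k)]=(m+k-1)(\Delta^mP)(k+1)-k(\Delta^mP)(k)$, so $(\Delta^mP)(k)$ is an $\operatorname{ad}P_1$-eigenvector \emph{only} when $k=1-m$. Consequently the eigenvalue-counting argument kills only the differences anchored at $1$ (and, via $y_n$, at $0$); and since $E_j=t^{-j}e_j$ acts on each weight space $V_{\gamma+m}$ as a conjugate of $P_j+m\,\mathrm{id}$, running the argument on all of $V$ yields no further conditions. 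What you actually obtain is that $P_k$ agrees with a polynomial for $k\le 1$, with no constraint at all on $P_k$ for $k\ge 2$ --- weaker than the ``finite linear recurrence in $k$'' you assert. The contrast with Proposition \ref{Prop} is instructive: there $z_n\varphi=0$ gives a recurrence at \emph{every} point $m$ because $z_n$ shifts the argument of the function $\varphi\in\operatorname{Hom}(A,T(\alpha,\gamma))$; in your reduction to $\operatorname{End}(U)$ that shifting feature is lost. The missing idea is a propagation step, which does follow from the master identity: for $r,m\ge 1$ one computes $[P_r,(\Delta^mP)(k)]=(k-r)(\Delta^mP)(k+r)-k(\Delta^mP)(k)+m\sum_{a=1}^r(\Delta^mP)(k+a)$; evaluating this at $k=c+1-r$ under the inductive hypothesis $(\Delta^mP)(j)=0$ for all $j\le c$ leaves $0=(c+1+m-2r)\,(\Delta^mP)(c+1)$, and since $r=1$ and $r=2$ cannot both annihilate the coefficient, $(\Delta^mP)(c+1)=0$. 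Starting the induction from the half-line vanishing ($c=1-m$) gives $(\Delta^mP)(k)=0$ for all $k\in\mathbb{Z}$, i.e.\ genuine polynomiality. With that paragraph inserted your proof closes; without it, the heart of the theorem is unproved.
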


A quick application of this theorem is to look at the module $\widehat{T(\beta, \gamma)}$. As this space is spanned by $\varepsilon_i$ and $\eta_i$ for $i \in \gamma +\mathbb{Z}$, then if $\varepsilon_j = t^{j-\gamma} \otimes \varepsilon$, $\eta_j = t^{j-\gamma} \otimes \eta$, so $\widehat{T(\beta,\gamma)} \cong A \otimes U'$, where $ U' = \langle \varepsilon, \eta \rangle \subset \widehat{T(\beta,\gamma)}_\gamma$.

The $W_1$-action on these elements is given by:
\begin{equation}\label{calculations}
e_k \varepsilon_j = (j + \beta k) \varepsilon_{j+k} 
\end{equation}
\begin{equation}\label{cal2}
e_k \eta_j = (j + k (\beta -1)) \eta_{j+k} - k^2 (\beta -1) \varepsilon_{j+k}
\end{equation}

By the previous theorem, we can derive the representation $\rho$: 

\begin{align*}
 \rho \left( z \dfrac{d}{dz} \right) &= \begin{pmatrix} \beta & 0 \\ 0 & \beta -1 \end{pmatrix}, &
\rho \left( z^2 \dfrac{d}{dz} \right)  &= \begin{pmatrix} 0& -2(\beta-1) \\ 0&0 \end{pmatrix}, & \rho \left( z^i \dfrac{d}{dz} \right)  &=0, \; \forall i \geq 3.
\end{align*}

Thus, $\rho \left( z \frac{d}{dz} \right)$ is the only element that acts non-trivially on $\varepsilon$, so that $e_k(t^m \otimes \varepsilon) = (m+\gamma)t^{m+k} \otimes \varepsilon + k t^{m+k}\otimes \beta  \varepsilon$.

Let us make use of the theorem for $\widetilde{M}$. Define $\sigma_0 \in \widehat{M}$ such that 
\begin{equation}\label{sigdef}
  \sigma_0 = \dfrac{1}{1-\beta}\left( \phi(e_0, w_\gamma) - \phi(e_1, w_{\gamma -1} ) \right) 
\end{equation}  
where $w_{\gamma -1},w_\gamma \in M$, the preimages of the basis vectors $w_{\gamma -1},w_\gamma \in T(\beta,\gamma)$. Then $\widehat{\pi} (\sigma_0)= \varepsilon_\gamma$
so $\sigma_0 \in \widehat{\pi}^{-1} (T(\beta, \gamma)) = \widetilde{M}$.

We make use of this element to define a basis of $M$. Let $\sigma_m = t^m \sigma_0$ and define $\widehat{w}_m = \pi(\sigma_m) \in M$. It follows that $ \widehat{w}_m 
= \sigma_0(t^m)$.

As we have already concluded, $\widetilde{M}$ is a submodule of a cuspidal $AW_1$-module, and thus is itself a cuspidal $AW_1$-module. 
By Theorem \ref{theorem}, we can write $\widetilde{M} = A \otimes \widetilde{M}_\gamma$, and the $\gamma$-weight space $\widetilde{M}_\gamma$ admits the action of $\mathscr{L}_+$. Notice $\sigma_0 \in \widetilde{M}_\gamma$ so that $\sigma_0 = 1 \otimes \sigma$ for some $\sigma \in \widetilde{M}_\gamma$.
Then
\begin{equation}\label{sigmanaught}
\begin{aligned}
 e_k\sigma_0 =& e_k (1 \otimes \sigma) =& \gamma t^k \otimes \sigma + t^k \otimes ( k u_1 + k^2 u_2 + \cdots + k^n u_n)
 \end{aligned}
\end{equation}
for all $k \in \mathbb{Z}$ where $u_i = \frac{1}{i!} \rho \left( z^i \frac{d}{dz} \right) \sigma$. As $\sigma \in \widetilde{M}_\gamma$, each $u_i \in \widetilde{M}_\gamma$. Then $e_k \sigma_0 \in \widetilde{M}_{k+\gamma}$. We can use this action to describe the $W_1$-action on the basis vectors $\widehat{w}_m$.
\begin{align*}
 e_k \widehat{w}_m 
&= (e_k \sigma_0)(t^m) + \sigma_0(e_k t^m) \\
&= (1 \otimes (\gamma \sigma +k u_1 + k^2 u_2 + \cdots + k^n u_n))(t^{m+k}) + m \sigma_0(t^{k+m}) \\
&= (m + \gamma + \beta k) \widehat{w}_{k+m} + \zeta(k,m) \widehat{w}_{k+m} +\tau(k,m) v_{k+m}
\end{align*}

The goal is to show that $\zeta$ is the zero function and $\tau(k,m)$ is a polynomial function in $k$ and $m$.
To do this, we will need to make use of both Proposition \ref{Prop} and equation (\ref{sigmanaught}). The problem is that the first result is in the context of Hom$(A, T(\alpha,\gamma))$ while the second result is in the context of $\mathscr{L}_+$-modules. The next lemma will give us a way to relate $AW_1$-modules to $\mathscr{L}_+$-modules.

\begin{lemma}\label{primemap}
Suppose $\varphi: A \otimes X \rightarrow A \otimes Y$ is a homomorphism of $AW_1$-modules for some $\mathscr{L}_+$-modules $X,Y$ and $1 \otimes X$ and $1 \otimes Y$ both have weight $\gamma$. Then the restriction map $\varphi': 1 \otimes X \rightarrow 1 \otimes Y$ is a homomorphism of $\mathscr{L}_+$-modules.
\end{lemma}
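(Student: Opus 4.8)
The plan is to recover the $\mathscr{L}_+$-action purely from the $A$-action and the $W_1$-action, and then to exploit the fact that a homomorphism of $AW_1$-modules commutes with both. First I would check that $\varphi'$ is even well defined. By the formula of Theorem \ref{theorem}, $e_0(t^m \otimes u) = (m+\gamma)(t^m \otimes u)$, since $0^i = 0$ for $i \geq 1$, so $1 \otimes X$ and $1 \otimes Y$ are precisely the weight-$\gamma$ spaces of $A \otimes X$ and $A \otimes Y$. Because $\varphi$ commutes with $e_0$, it preserves weight spaces and hence carries $1 \otimes X$ into $1 \otimes Y$; this restriction is exactly $\varphi'$.

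The central device is the weight-preserving operator $\Theta_k := t^{-k} e_k - \gamma \cdot \mathrm{id}$, defined on $A \otimes X$ and, by the identical formula, on $A \otimes Y$. Applying Theorem \ref{theorem} and then the $A$-action $t^{-k}(t^k \otimes v) = 1 \otimes v$, one computes
\[
\Theta_k(1 \otimes u) = 1 \otimes \sum_{i=1}^{\infty} \frac{k^i}{i!}\, \rho_X\!\left(z^i \tfrac{d}{dz}\right) u ,
\]
and the analogous identity on $A \otimes Y$ with $\rho_Y$. Since $X$ is finite dimensional, $\rho_X(z^i \tfrac{d}{dz}) = 0$ for all large $i$, so the right-hand side is a polynomial in $k$ with coefficients $1 \otimes \tfrac{1}{i!}\rho_X(z^i \tfrac{d}{dz}) u \in 1 \otimes X$.

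Next I would use that $\varphi$, being a homomorphism of $AW_1$-modules, commutes with each $t^{-k} \in A$ and each $e_k \in W_1$, hence with $\Theta_k$; that is, $\varphi \circ \Theta_k = \Theta_k \circ \varphi$. Evaluating this identity on $1 \otimes u$ and writing $\varphi'(1 \otimes u) = 1 \otimes v$ with $v \in Y$, the left side equals $\sum_{i \geq 1} \frac{k^i}{i!}\, \varphi'\big(1 \otimes \rho_X(z^i \tfrac{d}{dz}) u\big)$ (using that $\Theta_k$ preserves weight $\gamma$, so $\varphi$ may be replaced by $\varphi'$ there), while the right side equals $\sum_{i \geq 1}\frac{k^i}{i!}\, 1 \otimes \rho_Y(z^i \tfrac{d}{dz}) v$. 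Both are polynomial in $k$ and agree for every $k \in \mathbb{Z}$, so their coefficients coincide: for each $i \geq 1$,
\[
\varphi'\big(1 \otimes \rho_X(z^i \tfrac{d}{dz}) u\big) = 1 \otimes \rho_Y(z^i \tfrac{d}{dz}) v = \rho_Y(z^i \tfrac{d}{dz})\, \varphi'(1 \otimes u).
\]
Since $1 \otimes \rho_X(z^i \tfrac{d}{dz}) u = \rho_X(z^i \tfrac{d}{dz})(1 \otimes u)$ by definition of the module structure, this says precisely that $\varphi'$ intertwines $\rho_X$ and $\rho_Y$ on the generators $z^i \tfrac{d}{dz}$, $i \geq 1$, which generate $\mathscr{L}_+$; hence $\varphi'$ is a homomorphism of $\mathscr{L}_+$-modules.

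The step I expect to require the most care is the coefficient extraction: I must confirm that both sides of the $\Theta_k$ identity are genuine polynomials in $k$ (guaranteed by finite-dimensionality of $X$ and $Y$, which truncates the sums) and that equality for all integers $k$ forces equality of the coefficients. The only other point deserving attention is the bookkeeping that $\Theta_k$ preserves the weight-$\gamma$ space, so that $\varphi$ may legitimately be replaced by its restriction $\varphi'$ at each stage.
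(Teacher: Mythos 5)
Your proof is correct and takes essentially the same route as the paper's: both exploit that $\varphi$ commutes with the $A$-action and the $W_1$-action, expand $e_k$ via Theorem \ref{theorem}, and compare coefficients of the resulting polynomial identity in $k$ (justified by the vanishing of $\rho\left(z^i\frac{d}{dz}\right)$ for large $i$); your operator $\Theta_k = t^{-k}e_k - \gamma\,\mathrm{id}$ merely repackages the paper's cancelation of the common factor $t^{m+k}$ and of the $(m+\gamma)$-terms. The only addition is your explicit weight-space argument that $\varphi$ carries $1\otimes X$ into $1\otimes Y$, a point the paper leaves implicit.
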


\begin{proof}
To simplify notations, let $u_i = \frac{1}{i!}\rho ( z^i \frac{d}{dz})$. 
First notice that since $\varphi$ is an $A$-module homomorphism and a $W_1$-module homomorphism, 
$ 
 \varphi(t^m \otimes x) = t^m \cdot \varphi(1 \otimes x) = t^m \otimes \varphi'(x)
$ and
\begin{equation}\label{varphi1}
 \varphi(e_k (t^m \otimes x)) = e_k (\varphi(t^m \otimes x))
\end{equation}

By making us of Theorem \ref{theorem}, equation (\ref{varphi1}) becomes
\begin{align*}
\varphi \left( t^{m+k} \otimes  \left((m+\gamma) x + \sum_{i=0}^{\infty} k^i u_i x \right) \right) =& e_k (t^m \otimes \varphi'(x)) \\
 t^{m+k} \otimes \varphi ' \left( (m+\gamma) x + \sum_{i=0}^{\infty} k^i u_i x \right) =& t^{m+k} \otimes \left( (m+\gamma) \varphi'(x) + \sum_{i=0}^{\infty}k^i u_i \varphi'(x) \right) 
\end{align*}

Then the following is true for all $k \in \mathbb{Z}$:
\begin{equation}
 \varphi '\left(   \sum_{i=1}^{\infty} \frac{k^i}{i!}\rho \left( z^i \frac{d}{dz}\right) x \right)  = \left(  \sum_{i=1}^{\infty} \frac{k^i}{i!}\rho \left( z^i \frac{d}{dz} \right) \varphi'(x) \right)
\end{equation}

 These will be polynomial functions since for all $ x \in \mathscr{L}_+$, there exists $n \in \mathbb{N}$ such that $\rho\left( z^n \frac{d}{dz} \right)x=0$ by Lemma 2 of \ref{jets}. Since these functions are equal on all integer values then necessarily they must be equal as polynomial functions. Therefore,
\[
\varphi '\left( \rho \left( z^i\frac{d}{dz} \right) x \right)  = \rho \left( z^i \frac{d}{dz} \right) \varphi'(x),  \forall i \geq 1, \forall x \in X
\]
 Thus, $\varphi'$ is a homomorphism of $\mathscr{L}_+$-modules.
\end{proof}

Now, by defining $\widehat{\pi}' : 1 \otimes \widetilde{M}_\gamma \rightarrow 1 \otimes \widehat{T(\beta, \gamma)}_\gamma$, by the above results, $\widehat{\pi}'$ is a homomorphism of $\mathscr{L}_+$-modules that is surjective and 
its kernel is such that $\widehat{M}(\alpha, \gamma) \cong A \otimes \text{ker}(\widehat{\pi}')$.

This map $\widehat{\pi}'$ will prove useful in showing the funtion $\tau(k,m)$ is a polynomial in $k$ and $m$.
Since $1 \otimes \widehat{\pi}'(\sigma)  = \widehat{\pi}(\sigma_0) = \varepsilon_\gamma = 1 \otimes \varepsilon$, we see that $\widehat{\pi}'(\sigma) = \varepsilon$ so that
\begin{align*}
\widehat{\pi}(e_k \sigma_0) &= \gamma t^k \otimes \widehat{\pi}'(\sigma) +t^k \otimes \widehat{\pi}'\left( \sum_{i=1}^{\infty} \frac{k^i}{i!} \rho \left( z^i \dfrac{d}{dz} \right) \sigma \right) \\
&= \gamma t^k \otimes \varepsilon + t^k \otimes \left( \sum_{i=1}^{\infty} \frac{k^i}{i!} \rho \left( z^i \dfrac{d}{dz} \right) \varepsilon  \right) \\
&= t^k \otimes \left( \gamma + \beta k \right) \varepsilon
\end{align*}

By applying $\widehat{\pi}$ to (\ref{sigmanaught}),
\[
\gamma t^k \otimes \widehat{\pi}' (\sigma) + t^k \otimes (k\widehat{\pi}'( u_1) + k^2 \widehat{\pi}'(u_2) + \cdots + k^n \widehat{\pi}'( u_n)) =  t^k \otimes (\gamma +\beta k ) \varepsilon
\]
so that  $k^2 \widehat{\pi}'(u_2) + \cdots + k^n \widehat{\pi}'(u_n) =0$ for all $k \in \mathbb{Z}$. Since $\lbrace k^2, k^3, \dots , k^n \rbrace$ are linearly independent, this implies that $\widehat{\pi}'(u_i) =0$ for $2 \leq i \leq n$. But then $\lbrace u_i |2 \leq i \leq n  \rbrace \subset \text{ker}(\widehat{\pi}')$ so that $t^k \otimes u_i \in \widehat{M}(\alpha,\gamma)$ for $2 \leq i \leq n$. 

We now have that $ t^k \otimes k (\beta \varepsilon) = t^k \otimes (k \widehat{\pi}' (u_1)) $ so that $
 t^k \otimes 0 = t^k \otimes k ( \widehat{\pi}'(u_i) - \beta \varepsilon)$.
Thus $t^k \otimes k (\widehat{\pi}'(u_1) - \beta \varepsilon)= t^k \otimes k \widehat{\pi}'(\omega) =0$ for some $\omega \in \text{ker}(\widehat{\pi}')=\widehat{M}(\alpha, \gamma)$. We conclude that $\widehat{\pi}'(u_1) = \beta \varepsilon  +\widehat{\pi}'(\omega)$. 
so that we may simplify (\ref{sigmanaught}) to:
\[
e_k \sigma_0 =(\gamma +\beta k)t^k \otimes \sigma + t^k \otimes(k \omega + k^2 u_2 + \cdots + k^n u_n )
\]

Since $u_i \in \widehat{M}(\alpha,\gamma)$ and $\omega \in \widehat{M}(\alpha, \gamma)_\gamma$, then we may apply Proposition \ref{Prop} to write $u_i = \sum_{j=0}^{N_i} c_{i,j} \theta_\gamma^{(j)} + c_{i, N_i +1}\delta_\gamma$ for $2 \leq i \leq n$, $N_i \in \mathbb{N}$ and $c_{i,j} \in \mathbb{C}$. Similarly, $\omega = \sum_{j=0}^{N_1} c_{1, j} \theta^{(j)}_\gamma + c_{1, N_1 +1} \delta_\gamma$ for $N_1 \in \mathbb{N}$ and $ c_{1,j} \in \mathbb{C}$. Then $t^k \otimes (k\omega+ k^2u_2 + \cdots + k^n  u_n) = t^k \otimes \left( \sum_{i=1}^n k^i \left( \sum_{j=0}^{N_i} c_{i,j} \theta_\gamma^{(j)} + c_{i, N_i +1} \delta_\gamma \right)\right)$.

Finally, we can say something about the $W_1$-action on $\widehat{w}_m$.
\begin{align*}
 {e_k \widehat{w}_m} 
&= {e_k \pi(\sigma_m)} \\
&= e_k \sigma_0 (t^m)\\
&= \gamma \widehat{w}_{m+k}+t^k \otimes (k u_1 + k^2 u_2 + \cdots + k^n u_n)(t^m)+m \widehat{w}_{k+m}\\
&= t^k \otimes k \beta \sigma (t^m) + t^k \otimes \left( \sum_{i=1}^n k^i \left( \sum_{j=0}^{N_i} c_{i,j} \theta_\gamma^{(j)} + c_{i, N_i +1} \delta_\gamma \right)\right)(t^{m}) + (m +\gamma){\widehat{w}_{k+m}}\\
&= (m+\gamma+\beta k) \widehat{w}_{k+m} + \left( \sum_{i=1}^n k^i \left( \sum_{j=0}^{N_i} c_{i,j} \theta_\gamma^{(j)} + c_{i, N_i +1} \delta_\gamma \right)\right)(t^{m+k})
\end{align*}

Notice that since for all $k,m \in \mathbb{Z}$,
\begin{align*}
\sum_{i=1}^n k^i \left( \sum_{j=0}^{N_i} c_{i,j} \theta_\gamma^{(j)} + c_{i, N_i +1} \delta_\gamma \right)(t^{m+k}) &= t^k \otimes  \sum_{i=1}^n k^i \left( \sum_{j=0}^{N_i} c_{i,j} \theta_\gamma^{(j)} + c_{i, N_i +1} \delta_\gamma \right)(t^m) \\
&= \pi \left(t^{m+k} \otimes\sum_{i=1}^n k^i \left( \sum_{j=0}^{N_i} c_{i,j} \theta_\gamma^{(j)} + c_{i, N_i +1} \delta_\gamma \right)\right) \\
&= \pi \left( 1\otimes \sum_{i=1}^n k^i \left( \sum_{j=0}^{N_i} c_{i,j} \theta_{m+k+\gamma}^{(j)} + c_{i, N_i +1} \delta_{m+k+\gamma} \right)\right)
\end{align*}
We may think of $t^{m+k} \otimes\sum_{i=1}^n k^i \left( \sum_{j=0}^{N_i} c_{i,j} \theta_\gamma^{(j)} + c_{i, N_i +1} \delta_\gamma \right)$ as the image under $\pi$ of a polynomial $\widetilde{\tau}(k,m)$ in $k$ and $\theta_{m+k+\gamma}$ functions with a possible $\delta_{k+m+\gamma}$ function. Thus,
\[
\pi( \widetilde{\tau}(k,m)) = \tau(k,m) v_{m+k+\gamma}
\]
where $\tau(k,m)$ is a polynomial in $k$ and $m$, with a possible delta function. We may shift $m\in \mathbb{Z}$ to $m \in \gamma + \mathbb{Z}$ to simplify our notation. 

Now we apply the surjective map $\overline{\cdot} :M \rightarrow T(\beta,\gamma)$.
Since $\widetilde{\tau}(k,m) \in  \widehat{M}(\alpha,\gamma)$, using  $\pi(\widehat{M}(\alpha,\gamma)) \subset T(\alpha,\gamma)$ we obtain
\begin{align*}
 \overline{e_k \widehat{w}_m} &=  \overline{(m+\beta k) \widehat{w}_{m+k}}+\overline{\pi(\widetilde{\tau}(k,m))} \\
 &= (m+ \beta k) w_{k+m}
\end{align*}

This shows that $\zeta(k,m) = 0$, so that $e_k \widehat{w}_m = (m+ \beta k) \widehat{w}_{k+m} + \tau(k,m) v_{k+m}$. We can identify $T(\beta, \gamma)$ in $\widetilde{M}$ by using basis vectors $\varepsilon_m=t^m \otimes \sigma$ so that $\varepsilon_m(1) = \widehat{w}_m$. The $W_1$-action on $\widetilde{M}$ is given by
\[
e_k \varepsilon_m = (m+\beta k)\varepsilon_{k+m} + \widetilde{\tau}(k,m)
\]
where $\widetilde{\tau}(k,m)$ is contained in a submodule of the form in Proposition \ref{Prop}. 

This is enough to show that our cocycles are combinations of polynomials and delta functions. By Proposition \ref{Prop}, we know that $\widetilde{\tau}(k,m)$ is contained in the submodule $\lbrace \theta_k^{(0)}, \dots, \theta_k^{(N)}, \delta_n \rbrace$ for some $N$. Thus, $\widehat{M}(\alpha,\gamma)$ is contained in a module $\widehat{M}_N(\alpha,\gamma)$ where $\widehat{M}_N(\alpha,\gamma)$ is the submodule spanned by $\lbrace \theta_k^{(0)}, \dots, \theta_k^{(N)}, \delta_k \rbrace$. As $\widehat{M}(\alpha,\gamma) \subseteq \widehat{M}_N(\alpha,\gamma)$, we may extend our module $\widetilde{M}$ to a module $\widetilde{M}_N$ given by the extension
\[
0 \rightarrow \widehat{M}_N(\alpha,\gamma) \rightarrow \widetilde{M}_N \rightarrow T(\beta, \gamma) \rightarrow 0
\]
This will be convenient in the next section when we consider $\widetilde{\tau}$ cocycles as we may assume that $\theta_k^{(i)}$ is contained in our submodule for each $i \in \mathbb{N}$. As $\widetilde{M} \subset \widetilde{M}_N$, then $\pi: \widetilde{M}_N \rightarrow M$ will be surjective for $\beta \neq 1$, which will allow us reclaim the cocycles from the original extension of $M$. 

\section{Polynomial Cocycles}

As long as $\alpha \neq 0$ and $\beta \neq 1$, every cocycle is a polynomial function. For this section, we consider only polynomial functions, even possibily in these two cases. As mentioned in the introduction, we will do these calculations with $\widetilde{\tau}(k,m)$ in the context of $\widetilde{M}_N$ rather than $M$; the homomorphism $\pi$ can be used to recover the functions $\tau(k,m)$. 

\subsection{General results for polynomial cocycles}

The $W_1$-action on $\varepsilon_m$ is given by
\[
 e_k \varepsilon_m = (m+ \beta k) \varepsilon_{k+m} + \sum_{i=0}^{n} \sum_{\ell=1}^{s} c_i k^{\ell} \theta_{k+m}^{(i)}
\]
where $k \in \mathbb{Z}, n,s \in \mathbb{Z}_+$ and $m \in \gamma + \mathbb{Z}$. The space of polynomials $\mathbb{C}[k,m]$ admits a $\mathbb{Z}$-grading, where homogeneous elements of degree $n$ consist of monomials whose powers of $k$ and $m$ sum to $n$. Using (\ref{taucondition}), we can see that each homogeneous component of the polynomial $\tau(k,m)$ must independently satisfy the cocycle condition. Similarly, this is true for the coboundary condition. 

For the cocycle $\widetilde{\tau}(k,m)$, we may introduce an analogous idea of a homogeneous element. A homogeneous element of degree $n$ will consist of monomials $k^\ell \theta_{k+m}^{(i)}$ for which $\ell +i =n$. In this way, we obtain a $\mathbb{Z}$-grading on the 
cohomology space. Again, each homogeneous component of $\widetilde{\tau}(k,m)$ will independently satisfy (\ref{taucondition}) and (\ref{trivial}), so that it is enough to consider homogeneous cocycles:
\begin{equation}\label{epsilon}
 e_k \varepsilon_m = (m+\beta k)\varepsilon_{m+k} + \sum_{i=0}^{n-1} c_i k^{n-i} \theta_{m+k}^{(i)}
\end{equation}

By Theorem \ref{theorem},
\begin{equation}\label{rhoepsilon}
e_k \varepsilon_m = m \varepsilon_{m+k} + \sum_{i=1}^{\infty}\dfrac{k^i}{i!} \left( \rho \left(z^i \dfrac{d}{dz}\right) \varepsilon \right) _{(m+k)}
\end{equation}
\begin{equation}\label{eq3}
e_k \theta_m^{(p)} = m \theta_{m+k}^{(p)} + \sum_{i=1}^{\infty}\dfrac{k^i}{i!} \left( \rho \left( z^i \dfrac{d}{dz} \right) \theta^{(p)} \right)_{(m+k)}
\end{equation}

\begin{lemma}
The $W_1$-action on $\theta_j^{(p)}$ is given by
\begin{equation}\label{theta}
 \begin{aligned}
 e_k \theta_j^{(p)} =& \sum_{i=0}^{p-1} \dfrac{(-1)^{p-i}}{(p+1-i)!} \left( (p+1-i) \alpha -(p+1)\right) k^{p+1-i} \theta_{j+k}^{(i)}
 \\ & + (j+(\alpha-p)k)\theta_{j+k}^{(p)} 
 \end{aligned}
\end{equation}
\end{lemma}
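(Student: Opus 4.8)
The plan is to verify (\ref{theta}) directly from the definition of the $W_1$-action on $\mathrm{Hom}(A, T(\alpha,\gamma))$ recorded in (\ref{homAM}). Since the monomials $\{t^m : m \in \gamma+\mathbb{Z}\}$ span $A$, two elements of $\mathrm{Hom}(A,T(\alpha,\gamma))$ agree if and only if they agree on every $t^m$; so I would evaluate both sides of (\ref{theta}) at an arbitrary $t^m$ and compare. Both sides are scalar multiples of the single vector $v_{m+j+k}$ --- the left side because $e_k$ shifts the index by $k$, the right side because each $\theta^{(i)}_{j+k}(t^m)=\tfrac{(m+j+k)^i}{i!}v_{m+j+k}$ --- so the claim reduces to a polynomial identity among the scalar coefficients.

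For the left side, using $(e_k\psi)(f)=e_k(\psi(f))-\psi(e_k f)$ together with $e_k(t^m)=m\,t^{m+k}$ and the submodule action $e_k v_\ell=(\ell+\alpha k)v_{\ell+k}$, I obtain
\[
(e_k\theta_j^{(p)})(t^m)=\frac{1}{p!}\Big[(m+j)^p(m+j+\alpha k)-m\,(m+j+k)^p\Big]\,v_{m+j+k}.
\]
The task is then to re-expand the bracketed scalar in powers of $s:=m+j+k$, since $\tfrac{s^i}{i!}$ is exactly the coefficient produced by $\theta^{(i)}_{j+k}(t^m)$. Writing $m+j=s-k$ and $m=s-k-j$, the bracket becomes $(s-k)^{p+1}+\alpha k(s-k)^p-(s-k)s^p+j\,s^p$. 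The summand $j\,s^p$ is peeled off first: it contributes the term $(j+\cdots)\theta^{(p)}_{j+k}$ and is the only place where $j$ survives, matching the structure of (\ref{theta}). Expanding the remaining three terms by the binomial theorem and collecting powers of $s$, I would check that the coefficient of $s^{p+1}$ vanishes, that the coefficient of $s^p$ equals $(\alpha-p)k$ (which combines with $j\,s^p$ to give $(j+(\alpha-p)k)\theta^{(p)}_{j+k}$), and that for $0\le i\le p-1$ the coefficient of $s^i$ is $(-1)^{p+1-i}\big[\binom{p+1}{i}-\alpha\binom{p}{i}\big]k^{p+1-i}$.

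The final step is to convert back to the $\theta$-basis: since $\theta^{(i)}_{j+k}(t^m)=\tfrac{s^i}{i!}v_{s}$, extracting the coefficient of $\theta^{(i)}_{j+k}$ multiplies the $s^i$-coefficient by $\tfrac{i!}{p!}$, and the two elementary identities $\tfrac{i!}{p!}\binom{p+1}{i}=\tfrac{p+1}{(p+1-i)!}$ and $\tfrac{i!}{p!}\binom{p}{i}=\tfrac{1}{(p-i)!}$ collapse the result into $\tfrac{(-1)^{p-i}}{(p+1-i)!}\big((p+1-i)\alpha-(p+1)\big)k^{p+1-i}$, which is precisely the claimed coefficient. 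I expect the only real obstacle to be bookkeeping --- correctly tracking the signs introduced by the substitution $s=m+j+k$ and verifying the two factorial identities --- rather than any conceptual difficulty; no input beyond the two module actions and the binomial theorem is required.
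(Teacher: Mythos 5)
Your proposal is correct and is precisely the ``direct verification from the $W_1$-action'' that the paper invokes while omitting the details: evaluating both sides on $t^m$, substituting $s=m+j+k$, and matching coefficients of $s^i$ does yield the claimed formula, and your binomial bookkeeping (including the identities $\tfrac{i!}{p!}\binom{p+1}{i}=\tfrac{p+1}{(p+1-i)!}$ and $\tfrac{i!}{p!}\binom{p}{i}=\tfrac{1}{(p-i)!}$) checks out. No gap; this is the same approach, just written out in full.
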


This is directly verified by the $W_1$-action defined in Section 1 so the proof is omitted.

Here, we are interested in finding all non-trivial polynomial cocycles. It is crucial that we know the form of the trivial homogeneous polynomial coboundaries. For $\widetilde{\tau}(k,m)$, a 1-coboundary is of the form $-(e_k \varphi)(\varepsilon_i)$ for $\varphi \in \text{Hom}(T(\beta, \gamma), \widehat{M}_N(\alpha, \gamma))$. Here, $\varphi(\varepsilon_i) = g(i) \theta_i^{(n)}$, where $g: \gamma + \mathbb{Z} \rightarrow \mathbb{C}$. Then coboundaries are of the form
\begin{equation}\label{coboundaries}
 \begin{aligned}
\widetilde{\tau}(k,m) =& \, m (g(m+k) - g(m)) \theta_{m+k}^{(n)} + (\beta k g(m+k) -(\alpha-n)k g(m)) \theta_{m+k}^{(n)} \\
 &- \sum_{i=0}^{n-1} \dfrac{(-1)^{n-i}}{(n+1-i)!} \left( (n+1-i) \alpha -(n+1)\right) g(m) k^{n+1-i} \theta_{m+k}^{(i)}
 \end{aligned}
\end{equation}

In general, the map $g$ has very few restrictions. In this case though, we would like to find coboundaries that are homogeneous polynomials in $k$ and $\theta_{m+k}^{(i)}$. The following lemma outlines the restrictions on $g$ that make coboundaries of interest to us.

\begin{lemma}\label{g(m)=delta}
The function $g(x)$ makes (\ref{coboundaries}) a homogeneous polynomial 1-coboundary in $k$ and $\theta_{m+k}^{(i)}$ only when:
\begin{enumerate}
 \item $g(x)$ is a constant function; this admits coboundaries that are homogeneous polynomials in $k$ and $\theta$.
 \item $g(x)$ is a rational function, $\alpha=0$ and $\beta =1$; this admits the delta-function coboundary $ \delta_{m,0} - \delta_{m+k,0}$.
\end{enumerate}
\end{lemma}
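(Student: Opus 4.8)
The plan is to read (\ref{coboundaries}) off one $\theta$-level at a time and to use that a homogeneous polynomial in $k$ and the $\theta^{(i)}_{m+k}$, in the sense of (\ref{epsilon}), has coefficients in $\mathbb{C}$ with no residual dependence on the weight variable $m$. Writing $A_i = \frac{(-1)^{n-i}}{(n+1-i)!}\bigl((n+1-i)\alpha-(n+1)\bigr)$, formula (\ref{coboundaries}) reads
\[
\widetilde{\tau}(k,m) = P(k,m)\,\theta^{(n)}_{m+k} - \sum_{i=0}^{n-1} A_i\, g(m)\, k^{\,n+1-i}\,\theta^{(i)}_{m+k},
\qquad P(k,m) = m\bigl(g(m+k)-g(m)\bigr) + \beta k\, g(m+k) - (\alpha-n)k\, g(m).
\]
Thus the coefficient of $\theta^{(i)}_{m+k}$ for $i<n$ is $-A_i g(m)k^{n+1-i}$, and the coefficient of $\theta^{(n)}_{m+k}$ is $P(k,m)$; homogeneity forces each of these to be $m$-free.

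First I would dispose of the lower levels. For $i<n$ the coefficient $-A_i g(m)k^{n+1-i}$ is $m$-free precisely when $A_i=0$ or $g$ is constant. Since $A_i=0$ is equivalent to $\alpha=\tfrac{n+1}{\,n+1-i\,}$, and these values are pairwise distinct in $i$, at most one $A_i$ can vanish for a fixed $\alpha$; hence whenever $n\ge 2$ some $A_i\neq 0$ and $g$ must be constant. The only way for all lower coefficients to be absent is therefore $n=0$, or $n=1$ with $\alpha=1$. For constant $g\equiv c$ one checks directly that $g(m+k)-g(m)=0$ annihilates the first summand of $P$, leaving $P(k,m)=c(\beta-\alpha+n)k$, while the lower terms become $-A_i c\,k^{n+1-i}$; every coefficient is then a constant and the coboundary is homogeneous of degree $n+1$. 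This is conclusion (1) together with its converse, and I would record here that the degree-$(n+1)$ coboundaries obtained from constant $g$ as $n$ varies span the full space of homogeneous polynomial coboundaries — so that the sporadic homogeneous coboundaries one may build from non-constant $g$ at low $n$ (e.g. $g(x)=x$ when $\alpha=\beta+1$) already lie in this span and produce nothing new.

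I then turn to non-constant $g$, where by the above the lower levels are absent and it remains to solve ``$P(k,m)$ is $m$-free'' for the top level. The decisive observation is the substitution $G(x):=x\,g(x)$: one has $(m+\beta k)g(m+k)=G(m+k)$ and $(m+\alpha k)g(m)=G(m)$ simultaneously exactly when $\beta=1$ and $\alpha=0$, in which case (with $n=0$) $P(k,m)=G(m+k)-G(m)$. A finite-difference argument shows $G(m+k)-G(m)$ is $m$-free iff $G$ is affine, i.e. $g(x)=a+b/x$; the $b/x$ term is rational, and since $x\mapsto xg(x)$ agrees with the affine interpolant $ax+b$ off the single point $x=0$, it contributes exactly the delta coboundary $b\bigl(\delta_{m,0}-\delta_{m+k,0}\bigr)$, which is a legitimate module element because Proposition \ref{Prop} permits a $\delta$-component precisely when $\alpha=0$. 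This is conclusion (2).

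The main obstacle is this last functional-equation step: I must show that $m$-freeness of $P(k,m)$ admits no genuinely new solution away from the locus $\alpha=0,\ \beta=1$, and must isolate the rational solution on that locus. I would handle it by applying the weight shift $m\mapsto m+1$ to $P$, subtracting, and comparing coefficients of each power of $k$, which converts ``$P$ is $m$-free'' into a rigid recurrence on the values of $g$; the substitution $G=xg$ is precisely what linearises this recurrence in the distinguished case, while off that locus the recurrence forces $g$ constant (up to the spanning coincidences noted above). Tracking the failure of $xg(x)$ to match its polynomial interpolant at the pole is exactly what manufactures the $\delta$-function, so the care in this step is bookkeeping the single exceptional weight rather than any deeper difficulty.
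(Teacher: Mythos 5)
Your treatment of part (1) is correct and is essentially the paper's: the paper compares the coefficient of $\theta^{(0)}_{m+k}$ when $\alpha\neq 1$ and of $\theta^{(1)}_{m+k}$ when $\alpha=1$, $n\geq 2$, which is exactly your observation that at most one $A_i$ can vanish for a fixed $\alpha$, and the constant-$g$ computation agrees. Your flagging of the sporadic non-constant polynomial solutions (e.g.\ $g(x)=x$ with $n=0$, $\alpha=\beta+1$, giving the coboundary $\beta k^{2}\theta^{(0)}_{m+k}$) is a genuine refinement --- the paper's proof reaches the dichotomy ``$g$ is a polynomial or $\beta=1$'' and then silently drops the polynomial branch --- but your repair of the ``only when'' clause, namely that every such sporadic coboundary already lies in the span of the constant-$g$ coboundaries (\ref{trivial}), is asserted rather than proved. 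It is true, but it needs an argument: one must enumerate the pairs $(\deg g,\alpha,\beta)$ for which the top coefficient collapses to a single power of $k$ (this happens only for $\deg g\leq 2$, at isolated values of $(\alpha,\beta)$) and check each output against (\ref{trivial}).

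The genuine gap is in conclusion (2), which is precisely where the paper's proof does its real work. Showing that the substitution $G(x)=xg(x)$ linearises $P(k,m)$ exactly when $\beta=1$, $\alpha=0$ says nothing about what happens off that locus: you still must exclude admissible non-polynomial $g$ for every other $(\alpha,\beta)$, and your deferred ``recurrence'' sketch does not do this. As described it cannot work: after shifting $m\mapsto m+1$ and subtracting, the resulting identity still contains $g(m+k)$ and $g(m+k+1)$, whose dependence on $k$ is the unknown, so ``comparing coefficients of each power of $k$'' is meaningless until polynomiality of $g$ is already known --- which is the point at issue. The paper avoids this circularity by fixing the other variable: setting $m=\gamma$ makes $(\gamma+\beta k)g(\gamma+k)$ an explicit polynomial in $k$, so $g(x)$ is a polynomial divided by $\beta x+(1-\beta)\gamma$; repeating with $m=\gamma+1$ gives a second denominator $\beta x+(1-\beta)(\gamma+1)$, and unless these two linear forms coincide (which forces $\beta=1$, and then $\alpha=n$ to kill the residual $m^{-1}$ term), $g$ must be a polynomial. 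Without this step, or an equivalent one, you have only the ``if'' direction of (2). Two smaller points: the paper's proof also produces the delta coboundary at the level $n=1$, $\alpha=1$, $\beta=1$ (its case ``$\alpha=n$''), which your substitution analysis, restricted to $n=0$, misses even though the second identity $(m+(\alpha-n)k)g(m)=G(m)$ holds there automatically; and your appeal to Proposition \ref{Prop} to legitimise the delta coboundary conflates the module elements $\delta_k\in\text{Hom}(A,T(\alpha,\gamma))$ with the delta-type dependence of $\widetilde{\tau}$ on $m$ --- for each fixed $(k,m)$ the element $(\delta_{m,0}-\delta_{m+k,0})\theta^{(n)}_{m+k}$ lies in the span of the $\theta$'s, so no $\delta$-component of the module is invoked; what forces $\alpha=0$ is the constraint $\alpha=n$ with $n=0$, not Proposition \ref{Prop}.
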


\begin{proof}
First, we assume that $\widetilde{\tau}(k,m)$ is of the form described in Proposition \ref{Prop}. By comparing both sides of (\ref{coboundaries}) for $\alpha \neq 1$, the coefficient of $\theta^{(0)}_{m+k}$ must be a polynomial in $k$ that is independent of $m$. Thus for $n\geq 1$, $g$ must be a constant function. If $\alpha=1$ and $n\geq 2$, we may consider the coefficient of $\theta_{m+k}^{(1)}$ to come to the same conclusion. 

We need to consider the cases of $n=0$ and ($n=1$ and $\alpha=1$). The coefficient of $\theta^{(n)}$ is given by
\begin{equation}\label{coefficient}
(m+\beta k) g(m+k) - (m+ (\alpha -n) k) g(m)
\end{equation}
This must be a polynomial in $k$. If we set $m = \gamma$ in (\ref{coefficient}) 
then as $g(\gamma)$ is a constant, $(\gamma + \beta k) g(\gamma +k)$ must be a polynomial in $k$. So $g(x) = \frac{q_1(x)}{(\beta x - (1-\beta)\gamma)}$ for some polynomial $q_1(x)$. Similarly, if we set $m=\gamma+1$, 
we conclude that $g(x) = \frac{q_2(x)}{(\beta x + (1-\beta)(\gamma+1))}$ for some polynomial $q_2(x)$. 
Thus, 
\[
\frac{q_1(x)}{(\beta x - (1-\beta)\gamma)} =  \frac{q_2(x)}{(\beta x + (1-\beta)(\gamma+1))}
\]
so that either $g$ is a polynomial or $(\beta x - (1-\beta)\gamma) = (\beta x + (1-\beta)(\gamma+1))$. 
If these two factors are equal, then $\beta =1$ and for some polynomial $q(x)$ and some constant $c$, 
\[
g(x) = \left\lbrace \begin{array}{ll}
\dfrac{q(x)}{x}, & x \neq 0\\
c, & x=0 
\end{array}
\right.
\]
The coefficient (\ref{coefficient}) at $m=\gamma$ becomes
$
q(\gamma+k) - \frac{(\gamma+(\alpha-n)k)}{\gamma}q(\gamma)
$
and thus $\alpha=n$ to obtain a polynomial coboundary. Notice for $n=1, \alpha=1$ this is already true while for $n=0$, this requires that $\alpha=0$. This reduces (\ref{coefficient}) to
\[
(m+k)g(m+k) - m g(m) = q(m+k) - q(m) + \delta_{m,0} q(m) - \delta_{m+k, 0}q(m+k)
\]
for ($\alpha=0, n=0$) and ($\alpha=1, n=1$).
\end{proof}

Therefore, for strictly homogeneous polynomial coboundaries of degree $n$, $\widetilde{\tau} \sim 0$ if there exists $ h \in 
\mathbb{C}$ such that 
\begin{equation}\label{trivial}
 \begin{aligned}
\widetilde{\tau}(k,m) =&h \sum_{i=0}^{n-2} \dfrac{(-1)^{n-1-i}}{(n-i)!}  \left((n-i) \alpha - (n) \right) k^{n-i} \theta_{m+k}^{(i)}
  \\  
  & + h k (\alpha - \beta - n +1) \theta_{m+k}^{(n-1)}
 \end{aligned}
\end{equation}

We can obtain some more information about the representation $\rho$, which will be a useful our calculations. From (\ref{epsilon}) - (\ref{theta}), we can derive the formulae:
\begin{align}
 \rho\left(z \dfrac{d}{dz}\right)\theta^{(p)} &= (\alpha -p) \theta^{(p)}, \\
 \rho\left(z^i \dfrac{d}{dz}\right)\theta^{(p)} &=  (-1)^{i-1} \left(i \alpha - (p+1)\right)  \theta^{(p+1-i)}, i \geq 2\\
  \rho\left(z \dfrac{d}{dz}\right)\varepsilon &= \beta\varepsilon + c_{n-1}\theta^{(n-1)}, \\
   \rho\left(z^i \dfrac{d}{dz}\right)\varepsilon &= i! c_{n-i} \theta^{(n-i)}, i \geq 2
\end{align}

Also, since $\rho$ is a representation,
\begin{equation}\label{rhorep}
\left[\rho\left(z^i \dfrac{d}{dz}\right) , \rho\left(z^j \dfrac{d}{dz}\right)\right] u  = (j-i) \rho\left(z^{i+j-1} \dfrac{d}{dz}\right) u
\end{equation}

The next two results will determine the conditions on the coefficients $c_i$ using (\ref{rhorep}).

\begin{lemma}\label{AB} For all cocycles of degree $n$ for $n \in \mathbb{N}_+$, and for $i,j \in \mathbb{Z}_+$,
 \begin{enumerate}
  \item For $i\geq 2$, 
   \begin{equation}\label{B1}
  (\alpha - \beta -(n-1)) c_{n-i} = (-1)^{i-1}(i \alpha -n)   c_{n-1} 
   \end{equation}
  \item For $i,j \geq 2$, $i+j \leq n+1$, and $i \neq j$
   \begin{equation}\label{A1}
   \begin{aligned}
   (i+j-1)!(j-i) c_{n-i-j+1} =& j!(-1)^{i-1} (i \alpha - (n-j+1)) c_{n-j} \\ &- i! (-1)^{j-1} (j \alpha - (n-i+1))c_{n-i} 
   \end{aligned}
   \end{equation}
 \end{enumerate}
\end{lemma}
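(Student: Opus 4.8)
The plan is to obtain both identities as consequences of a single fact: the operators $\rho(z^i\frac{d}{dz})$ furnish an honest representation of $\mathscr{L}_+$, so they must obey the bracket relation (\ref{rhorep}). The strategy is to apply (\ref{rhorep}) to the distinguished vector $\varepsilon$ and expand both sides using the four explicit action formulas recorded just before the lemma. The key structural observation is that each of those formulas sends $\varepsilon$ (or any $\theta^{(p)}$) to a scalar multiple of a single basis vector $\theta^{(q)}$; consequently, after expanding a commutator, every surviving term is a multiple of one and the same $\theta$, and the lemma reduces to equating two scalars.

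For part (1), I would take the indices $1$ and $i$ with $i\ge 2$ in (\ref{rhorep}), so that $[\rho(z\frac{d}{dz}),\rho(z^i\frac{d}{dz})]\varepsilon=(i-1)\rho(z^i\frac{d}{dz})\varepsilon$. On the left I first apply $\rho(z^i\frac{d}{dz})$ to $\varepsilon$ (landing in $\theta^{(n-i)}$) and then $\rho(z\frac{d}{dz})$, which acts on $\theta^{(n-i)}$ by the scalar $\alpha-(n-i)$; the reversed ordering uses $\rho(z\frac{d}{dz})\varepsilon=\beta\varepsilon+c_{n-1}\theta^{(n-1)}$ followed by $\rho(z^i\frac{d}{dz})$, and the identity $\rho(z^i\frac{d}{dz})\theta^{(n-1)}=(-1)^{i-1}(i\alpha-n)\theta^{(n-i)}$ shows the whole expression collapses onto $\theta^{(n-i)}$. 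Matching the coefficient of $\theta^{(n-i)}$ against the right-hand side produces (\ref{B1}); the factor $\alpha-\beta-(n-1)$ arises precisely as the combination $(\alpha-(n-i))-\beta-(i-1)$ after moving the $(i-1)\rho(z^i\frac{d}{dz})\varepsilon$ term across.

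For part (2), I would instead choose $i,j\ge 2$ with $i\ne j$, giving $[\rho(z^i\frac{d}{dz}),\rho(z^j\frac{d}{dz})]\varepsilon=(j-i)\rho(z^{i+j-1}\frac{d}{dz})\varepsilon$. Applying $\rho(z^j\frac{d}{dz})$ to $\varepsilon$ lands in $\theta^{(n-j)}$ and then $\rho(z^i\frac{d}{dz})$ sends it to $\theta^{(n-j+1-i)}$; the opposite ordering lands in $\theta^{(n-i+1-j)}$, and since $n-j+1-i=n-i+1-j$ both contributions are multiples of the single vector $\theta^{(n-i-j+1)}$. The right-hand side is a multiple of $\theta^{(n-(i+j-1))}$, the same vector, so equating the surviving coefficient yields (\ref{A1}). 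The hypothesis $i+j\le n+1$ is exactly what guarantees $i+j-1\le n$, so that $\rho(z^{i+j-1}\frac{d}{dz})\varepsilon$ lies in the defined range and the superscript $n-i-j+1$ stays nonnegative; the hypothesis $i\ne j$ is needed because $i=j$ kills the commutator and degenerates the relation to $0=0$, yielding no information.

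The only genuinely delicate points are bookkeeping ones: tracking the signs $(-1)^{i-1}$ and the factorials coming from $\rho(z^i\frac{d}{dz})\varepsilon=i!\,c_{n-i}\theta^{(n-i)}$, and verifying that the two orderings of each commutator really produce the identical superscript so that the coefficient comparison is legitimate. I would also check the boundary indices separately, confirming that $i+j-1\ge 2$ (so that the $i\ge 2$ action formula applies to $\rho(z^{i+j-1}\frac{d}{dz})\varepsilon$, which holds automatically since $i,j\ge 2$) and that $\theta^{(0)}$, when it occurs as a target, is still a legitimate basis vector distinct from $\varepsilon$. Beyond this routine verification, the entire content of the lemma is forced by the associativity constraint (\ref{rhorep}).
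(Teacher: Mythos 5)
Your proposal is correct and is essentially the paper's own proof: the paper likewise disposes of part (1) by applying (\ref{rhorep}) with $j=1$ to $u=\varepsilon$ and of part (2) by taking $i,j\geq 2$ with $u=\varepsilon$, the explicit action formulas for $\rho$ on $\varepsilon$ and $\theta^{(p)}$ doing the rest. One remark: carried out with the factorial bookkeeping you flag, the coefficient match in part (1) actually yields $i!\,(\alpha-\beta-(n-1))\,c_{n-i}=(-1)^{i-1}(i\alpha-n)\,c_{n-1}$; the factor $i!$ is missing from the displayed statement of (\ref{B1}) (a typo in the paper, whose own application of the identity in the proof of Theorem \ref{coefficients} includes the $i!$), so your derivation, rather than the equation as printed, is the correct form.
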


\begin{proof}
For part (1), set $j=1$, $u=\varepsilon$ so by (\ref{rhorep}) the claim follows immediately. For part (2), by setting $i,j \geq2$ in (\ref{rhorep}), then again the claim follows immediately.
\end{proof}

\begin{theorem}\label{coefficients}
For non-trivial 1-cocycles with $n\geq3$, $c_{n-1} =0$ and $\alpha - \beta =n-1$.
\end{theorem}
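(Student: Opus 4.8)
The plan is to split the argument according to whether the quantity $D := \alpha - \beta - (n-1)$ vanishes, and to show that a non-trivial cocycle forces both $D=0$ and $c_{n-1}=0$. The only inputs I expect to need are part (1) of Lemma \ref{AB}, i.e.\ equation (\ref{B1}), together with the explicit shape of the homogeneous coboundaries in (\ref{trivial}); part (2), equation (\ref{A1}), is not required, which is fortunate since it is vacuous in the boundary case $n=3$.

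First I would dispose of the case $D\neq 0$ by showing that every homogeneous degree-$n$ cocycle is then a coboundary. The key observation is that the $\theta^{(n-1)}_{m+k}$-component of the coboundary family (\ref{trivial}) is $hk(\alpha-\beta-n+1)\theta^{(n-1)}_{m+k} = hD\,k\,\theta^{(n-1)}_{m+k}$, while the cocycle (\ref{epsilon}) has $\theta^{(n-1)}_{m+k}$-component $c_{n-1}k\,\theta^{(n-1)}_{m+k}$. Since $D\neq 0$, I can choose $h = c_{n-1}/D$ and subtract the corresponding coboundary, producing a cohomologous cocycle $\widetilde{\tau}'$ whose leading coefficient $c'_{n-1}$ vanishes. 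As $\widetilde{\tau}'$ is still a cocycle, equation (\ref{B1}) applies to it and reads $D\,c'_{n-i} = (-1)^{i-1}(i\alpha-n)c'_{n-1} = 0$ for every $i\geq 2$; because $D\neq 0$ this forces $c'_{n-i}=0$ for all $i\geq 2$, so all coefficients of $\widetilde{\tau}'$ vanish and $\widetilde{\tau}'=0$. Hence the original cocycle is a coboundary, which is impossible for a non-trivial class. This establishes $\alpha-\beta = n-1$.

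With $D=0$ in hand, I would extract $c_{n-1}=0$ directly from (\ref{B1}), which degenerates to $0 = (-1)^{i-1}(i\alpha-n)c_{n-1}$ for every $2\leq i\leq n$. Here the hypothesis $n\geq 3$ is essential, since it guarantees that both $i=2$ and $i=3$ lie in the admissible range. If $c_{n-1}$ were non-zero I could cancel it to obtain $2\alpha=n$ and $3\alpha=n$ simultaneously, forcing $\alpha=0$ and hence $n=0$, contradicting $n\geq 3$. Therefore $c_{n-1}=0$, completing the proof. (For $n=2$ only $i=2$ is available, yielding the admissible value $\alpha=1$; this is precisely why the statement is restricted to $n\geq 3$.)

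The step I expect to require the most care is the coboundary cancellation when $D\neq 0$: one must confirm that the $\theta^{(n-1)}$-component of (\ref{trivial}) is genuinely the non-zero multiple $hD$ of the free parameter $h$, so that $h$ can always be tuned to kill $c_{n-1}$, and that subtracting this coboundary leaves a bona fide cocycle to which (\ref{B1}) may be reapplied. Once that bookkeeping is secured, the propagation $c'_{n-1}=0 \Rightarrow c'_{n-i}=0$ through (\ref{B1}) is immediate and, pleasantly, sidesteps any need to match every lower-order constant by hand.
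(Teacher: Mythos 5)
Your proposal is correct and rests on exactly the same ingredients as the paper's proof: part (1) of Lemma \ref{AB} together with the coboundary family (\ref{trivial}) normalized by the scalar $h = c_{n-1}/(\alpha-\beta-(n-1))$. The only difference is organizational: the paper first forces $c_{n-1}=0$ (by solving (\ref{B1}) for every $c_{n-i}$ when $c_{n-1}\neq 0$ and recognizing the resulting cocycle as precisely the coboundary (\ref{trivial})) and then deduces $\alpha-\beta = n-1$, whereas you prove the two conclusions in the opposite order, subtracting the coboundary first and letting (\ref{B1}) propagate the vanishing --- a reshuffling of the same argument, not a different method.
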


\begin{proof}
 Suppose $n \geq 3$ and suppose $c_{n-1}$ is non-zero. Then by (\ref{B1}), $ i!(\alpha-\beta -(n-1)) c_{n-i}$ is non-zero for all $i \neq \frac{n}{\alpha}$. As $n \geq 3$ there exits at least one $1 \leq i \leq n$ such that $ i \neq \frac{n}{\alpha}$; it follows that $(\alpha - \beta -(n-1))$ is necessarily non-zero. Thus
\[
  c_{n-i} = \frac{ (-1)^{i-1}(i \alpha-n) }{i! (\alpha - \beta -(n-1))} c_{n-1}
\]

The $W_1$ action on $\varepsilon_m$ is
\begin{align*}
 e_k \varepsilon_m 
 =& (m+\beta k)\varepsilon_{m+k}  + k c_{n-1} \theta_m^{(n-1)} + \sum_{i=2}^{n} c_{n-i} k^i \theta_m^{(n-i)} 
 \\
 =& (m+\beta k)\varepsilon_{m+k}  + k c_{n-1} \theta_m^{(n-1)} 
 \\
 &+ \frac{c_{n-1}}{(\alpha-\beta -(n-1))}\sum_{i=0}^{n-2} \frac{ (-1)^{n-i-1}((n-i) \alpha -n) }{(n-i)! } k^{n-i} \theta_m^{(i)} 
\end{align*}

Take $h = \frac{c_{n-1}}{\alpha - \beta - (n-1)}\in \mathbb{C}$. Then this 1-cocycle is equivalent to the trivial 1-cocycle. For non-trivial cocycles, $c_{n-1}$ must be zero, so $\alpha-\beta-(n-1)$ must be zero as well.
\end{proof}

The general technique to find cocycles of a specific degree is to first simplify the cocyle using Lemma \ref{AB} and Theorem \ref{coefficients}. We derive the representation $\rho$ using Theorem \ref{theorem} and then make use of (\ref{rhorep}) to derive a system of equations in the coefficients $c_i$. We solve the system of coefficients and find all general solutions, then find all non-trivial cocycles by looking at the solution space distinct from the trivial solution space.

\subsection{Cases when $n \leq 4$}

When $n \leq 4$, there does not exist positive integers $i$ and $j$ satisfying $i \geq 2$, $j \geq 2$ and $i \neq j$ such that $n-i-j+1$ is still positive so that Part (2) of Lemma \ref{AB} cannot be used. Instead, we look at each case individually.

\subsubsection{Cocycles of degree 1}

When $n=1$, we cannot use Part (1) of Lemma \ref{AB}
. Equality (\ref{epsilon}) does give us the possible form of the $W_1$ action:
\[
 e_k \varepsilon_m = (m+\beta k) \varepsilon_{m+k} + c_0 k \theta_{m+k}^{(0)}
\]
From this equation, we can derive the representation $\rho$:
\begin{align*}
 \rho \left (z \dfrac{d}{dz} \right) & = 
\begin{bmatrix}
 \alpha & c_0  \\
 0 & \beta \\
\end{bmatrix}, &
 \rho \left (z^i \dfrac{d}{dz} \right) & = 0, \text{ for } i \geq 2
\end{align*}
For valid cocycles, $\rho$ must satisfy the condition (\ref{rhorep}) but as $\left[ \rho \left( z \frac{d}{dz} \right), \rho \left( z \frac{d}{dz} \right) \right] =0$, this tells us that there are no restrictions on $c_0$ as any value will give a valid representation.

Coboundaries of degree 1 are of the form $hk (\alpha - \beta) \theta_{m+k}^{(0)}$ for some $h \in \mathbb{C}$. If $\alpha \neq \beta$, then $h = \frac{c_0}{\alpha-\beta}$ makes the cocycle trivial. When $\alpha = \beta$, the equivalence relationship is simply zero, so only cocycles strictly equal can be equivalent. Thus for $\alpha = \beta$, we obtain the equivalence class of $k \theta^{(0)}$.

\subsubsection{Cocycles of degree 2}

When $n=2$, Part (1) of Lemma \ref{AB} applies and we obtain the single equation for $i=2$:
\[
 2 (\alpha-\beta -1) c_{0} = -(2 \alpha-2)   c_{1} 
\]
By the proof of Theorem \ref{coefficients}, if $\alpha - \beta -1 \neq 0$, this cocycle will be trivial. Thus, $\alpha - \beta =1$ and we get two cases when the above equation is equal to zero: $c_{1} =0$, or $\alpha =1$.

\hfill

\textbf{Case of $c_{1}=0$}

When $c_1 =0$, $ e_k \varepsilon_m = (m+\beta k)\varepsilon_{m+k} + c_0 k^2 \theta_{m+k}^{(0)}$. We can derive $\rho$ and the system of coefficients we obtain from it gives no added restrictions on $c_0$.

The trivial cocycle is $-(\alpha -1) k^2 \theta_{m+k}^{(0)}$
so if $\alpha \neq 1$, every cocycle is trivial by setting $h = \frac{c_0}{\alpha -1}$. Therefore, from this case we obtain an equivalence class of $ k^2 \theta^{(0)} \text{ for } \alpha =1$ and $ \beta =0$.

\hfill

\textbf{Case of $\alpha =1$}

When $\alpha =1$, then $\beta =0$ and $ e_k \varepsilon_m= m \varepsilon_{m+k} + c_0 k^2 \theta_{m+k}^{(0)} + c_1 k \theta_{m+k}^{(1)}$.
Again, this gives 
no restrictions on $c_0$ or $c_1$. The coboundary of degree 2 is exactly zero when $\alpha =1$. This gives us the equivalence class of $k \theta^{(1)} \text{ for } \alpha =1$ and $ \beta =0$.

\subsubsection{Cocycles of degree 3}

For the case of $n=3$, we can now make use of Theorem \ref{coefficients}. For non-trivial cocycles, $\alpha - \beta = 2$, and $c_{2} =0$. Then, by (\ref{epsilon}),
\[
 e_k \varepsilon_m = (m+(\alpha -2)k) \varepsilon_{m+k} + c_0 k^3 \theta_{m+k}^{(0)} + c_1 k^2 \theta_{m+k}^{(1)}
\]

The representation $\rho$ gives no added restrictions on our coeficients. All we need to do is find an element in the 1-dimensional non-trivial solution space to find the equivalence class of non-trivial cocycles. The trivial cocycle of degree 2 is
$
(\alpha-1) k^3 \theta_{k+m}^{(0)} - (2\alpha -3) k^2 \theta_{k+m}^{(1)}
$.
For $c_0=1$, and $c_2 = -2$, the cocycle is not trivial. Thus we obtain an equivalence class of $ k^3  \theta^{(0)} -  2k^2  \theta^{(1)} $ for $\alpha - \beta =2$.

\subsubsection{Cocycles of degree 4}

The case of $n=4$ is very similar to the previous case. For non-trivial cocycles, $\alpha - \beta = 3$, and $c_{3} =0$. Then by (\ref{epsilon}),
\[
 e_k \varepsilon_m = (m+(\alpha -3)k) \varepsilon_{m+k} +c_0 k^4 \theta_{m+k}^{(0)}+ c_1 k^3 \theta_{m+k}^{(1)} + c_2 k^2 \theta_{m+k}^{(2)} 
\]

The representation $\rho$ gives the following condition from  (\ref{rhorep}) when $i=2, j=3$:
\[
 12(\alpha -1)c_1 + 6(\alpha-1)c_2 + 24c_0 =0
 \]
Thus cocycles are of the form $-\frac{(\alpha-1)}{4}( 2c_1 +c_2) k^4 \theta_{m+k}^{(0)} + c_1 k^3 \theta_{m+k}^{(1)} + c_2 k^2 \theta_{m+k}^{(2)}$. The 1-dimensional trivial solution space is given by the equivalence class of 
\[
(\alpha-1) k^4 \theta_{m+k}^{(0)} -(3\alpha - 4) k^3 \theta_{m+k}^{(1)} + 6(\alpha -2) k^2 \theta_{m+k}^{(2)}
\]
Then the 1-dimensional non-trivial solution space is spanned by the cocycle where $2c_1 = -c_2$. Thus, we obtain the equivalence class of $k^3 \theta^{(1)} - 2k^2 \theta^{(2)}$, for $\alpha - \beta =3$.

\subsection{Cases when $n \geq 5$}

As soon as $n \geq 5$, Part (2) of Lemma \ref{AB} can be used. If we set $i=2$, then
\[
(j+1)!(2-j)c_{n-j-1} - 2(-1)^{j-1}(j\alpha - n+1)c_{n-2} - j!(2\alpha- n+j-1)c_{n-j}=0
\]
for $j=3, 4, \dots, n-2$ so that we will obtain $n-3$ formulas for $ n \geq 5$. If we set $i=3$, 
\[
(j+2)!(3-j)c_{n-j-2} - 6(-1)^{j-1}(j\alpha - n+2)c_{n-3} + j!(3\alpha- n+j-1)c_{n-j}=0
\]
for $j=4, 5, \dots, n-2$ so that we obtain $n-5$ formulas for $ n \geq 7$. From this, we obtain the following coefficient matrix, where each column $j$ corresponds to the coefficient $c_{j-1}$.

$$
\begin{bmatrix}
 0 & 0 & 0 & \cdots & 0 & 0 & 0 & a_{1,n-4} & a_{1,n-3} & a_{1,n-2} \\
 0 & 0 & 0 &\cdots& 0 & 0 & a_{2,n-5} & a_{2,n-4} & 0 & a_{2,n-2} \\
 0 & 0 & 0 &\cdots& 0 & a_{3,n-6} & a_{3,n-5} & 0 & 0 & a_{3,n-2} \\
 \vdots&\vdots&\vdots& &\vdots&\vdots&\vdots&\vdots&\vdots&\vdots\\
 0 & a_{n-4,1} & a_{n-4,2} &\cdots& 0 & 0 &  0 & 0 & 0 & a_{n-4,n-2} \\
 a_{n-3,0} & a_{n-3,1} & 0 &\cdots& 0 & 0 &  0 & 0 & 0 & a_{n-3,n-2} \\
 0 & 0 & 0 &\cdots& 0 & b_{1,n-6} & 0 & b_{1,n-4} & b_{1,n-3} & 0 \\
 0 & 0 & 0 &\cdots& b_{2,n-7} & 0 & b_{2,n-5} & 0 & b_{2,n-3} & 0 \\
\vdots&\vdots&\vdots&  &\vdots&\vdots&\vdots&\vdots&\vdots&\vdots\\
 0 & 0 & b_{n-7,2} &\cdots& 0 & 0 &  0 & 0 & b_{n-7,n-3} & 0 \\
 0 & b_{n-6,1} & 0 &\cdots& 0 & 0 &  0 & 0 & b_{n-6,n-3} & 0 \\
 b_{n-5,0} & 0 & b_{n-5,2} &\cdots& 0 & 0 &  0 & 0 & b_{n-5,n-3} & 0 \\
\end{bmatrix}
$$

where, for $1 \leq i \leq n-3$,
\begin{align*}
 a_{i,n-2} &= 2(-1)^{i+2}((i+2)\alpha - (n-1))\\ 
 a_{i,n-2-i} &= -(i+2)!(2\alpha-(n-i-1)) \\ 
 a_{i,n-3-i} &= -i(i+3)!
\end{align*}

and for $1 \leq i \leq n-5$,
\begin{align*}
 b_{i,n-3} &= 6(-1)^{i+3} ((i+3)\alpha-(n-2))\\ 
 b_{i, n-3-i} &= (i+3)!(3\alpha - (n-i-2))\\ 
 b_{i,n-5-i} &= -i(i+5)!
\end{align*}

This matrix encodes a system of $(n-3) + (n-5) = 2n -8$ equations in $n-1$ variables, our $n-1$ coefficients. We are guaranteed to always have a 1-dimensional trivial solution space, which means that the rank of our matrix is at most $n-2$. If there exists a non-trivial solution, then the rank would have to be less than or equal to $n-3$.

To demonstrate this, we can further reduce this matrix to

$$
\begin{bmatrix}
 0 & 0 & 0 &\cdots& 0 & 0 & 0 & a_{1,n-4} & a_{1,n-3} & a_{1,n-2} \\
 0 & 0 & 0 &\cdots& 0 & 0 & a_{2,n-5} & 0 & a_{2,n-3} & a'_{2,n-2} \\
 0 & 0 & 0 &\cdots& 0 & a_{3,n-6} & 0 & 0 & a_{4,n-3} & a'_{3,n-2} \\
 \vdots&\vdots&\vdots&  &\vdots&\vdots&\vdots&\vdots&\vdots&\vdots\\
 0 & a_{n-4,1} & 0 &\cdots& 0 & 0 &  0 & 0 & a_{n-4,n-3} & a'_{n-4,n-2} \\
 a_{n-3,0} & 0 & 0 &\cdots& 0 & 0 &  0 & 0 & a_{n-3,n-3} & a'_{n-3,n-2} \\

 0 & 0 & 0 &\cdots& 0 & 0 & 0 & 0 & b'_{1,n-3} & b_{1,n-2} \\
 0 & 0 & 0 &\cdots& 0 & 0 & 0 & 0 & b'_{2,n-3} & b_{2,n-2} \\
 \vdots&\vdots&\vdots&  &\vdots&\vdots&\vdots&\vdots&\vdots&\vdots\\
 0 & 0 & 0 &\cdots& 0 & 0 &  0 & 0 & b'_{n-7,n-3} & b_{n-7,n-2} \\
 0 & 0 & 0 &\cdots& 0 & 0 &  0 & 0 & b'_{n-6,n-3} & b_{n-6,n-2} \\
 0 & 0 & 0 &\cdots& 0 & 0 &  0 & 0 & b'_{n-5,n-3} & b_{n-5,n-2} \\
\end{bmatrix}
$$

where, for $2 \leq i \leq n-3$,
\begin{align*}
 a_{1,n-2} =& -2(3\alpha - (n-1))\\
  a_{1,n-3} =& -6(2\alpha-(n-2)) \\ 
  a_{1,n-4} =& -24 \\
 a'_{i,n-2} =& \frac{2(-1)^i}{(i-1)!} \Bigg(\left[\sum_{j=3}^{i+1}(j-3)!(j\alpha-(n-1))\prod_{\ell=j}^{i+1}(2\alpha-(n-\ell))\right] \Bigg. \\ & \Bigg. +(i-1)!((i+2)\alpha - (n-1))\Bigg)\\ 
 a_{i,n-3} =& \frac{6(-1)^i}{(i-1)!}\prod_{j=1}^{i}(2\alpha - (n-(j+1))) \\ 
 a_{i,n-3-i} =& -i(i+3)!
\end{align*}
and for $1 \leq i \leq n-5$,
\begin{align*}
 b_{i,n-2} =&\left(  \dfrac{( 3\alpha - (n-2-i))}{i} \right)a'_{i,n-2} - \left( \dfrac{i}{i+2}\right) a'_{i+2, n-2}\\
 b'_{i,n-3} =& 6(-1)^{i+3}((i+3)\alpha - (n-2)) - \left(\dfrac{i}{i+2}\right)a_{i+2,n-3} + \left( \dfrac{(3\alpha - (n-2-i))}{i} \right)a_{i,n-3}
\end{align*}
which gives us the explicit formulae for $2 \leq i \leq n-5$
\begin{align*}
 b_{1,n-2} &= \dfrac{1}{3}(-n^3+7\alpha n^2+3n^2-16\alpha^2n-9\alpha n-2n +12\alpha^3+6\alpha) \\
 b'_{1,n-3} &= -n^3  +6\alpha n^2+ 3n^2  -12 \alpha^2 n -6\alpha n -2n +8 \alpha^3 +4\alpha \\
 b_{i,n-2} &= \frac{2(-1)^i}{i!} \Bigg[ \dfrac{i}{(i+1)(i+2)} \left(\sum_{j=1}^{i+3}(j-3)!(j\alpha-n+1)\prod_{\ell=j}^{i+3}(2\alpha - (n-\ell)) \right) \Bigg.\\
 & +(3\alpha - n +2+i) \left( \sum_{j=1}^{i+1}(j-3)!(j\alpha-(n-1))\prod_{\ell=j}^{i+1}(2\alpha - (n-\ell)) \right) \\
 & (i-1)!((i+2)\alpha -n+1)(3\alpha -n +2+i) + \dfrac{i(i+1)!}{(i+2)(i+1)}((i+5)\alpha -n+1) \Bigg] \\
 b'_{i,n-3} &= \dfrac{6(-1)^{i+1}}{(i+2)!} \Bigg[ (i+2)!\left( (i+3)\alpha - n+2) +  \prod_{l=j}^{i} (2\alpha - n+j+1) \right) \Bigg. \\
 & \Bigg. \times \left( i(2\alpha - n +i+2)(2 \alpha -n +i+3) - (i+1)(i+2)(3 \alpha -n +2+i) \right) \Bigg] 
 \end{align*}

\begin{remark}
This last matrix shows the rank is always at least $n-3$, which means there is always at most a 1-dimensional non-trivial solution space.
\end{remark}

\subsubsection{Cocycles of degree 5}

When $n=5$, we only get two formulae and thus our complete coefficient matrix is only a $2 \times 4$ matrix, so the rank is automatically less than or equal to $n-3=2$. Thus there exists a non-trivial solution, and we simply solve to find a satisfactory cocycle not equivalent to the trivial cocycle, as done in the previous sections. This gives an equivalence class of:
\[
 (\alpha -1) k^5 \theta^{(0)} - k^4 \theta^{(1)} - 12 k^3 \theta^{(2)} + 24k^2 \theta^{(3)}, \text{ for } \alpha - \beta =4.
\]

\subsubsection{Cocycles of degree 6}

When $n = 6$, the rank of the coefficient matrix will only be $n-3$ when $b'_{1,n-3} = b_{1,n-2} = 0$. This means there is only a non-trivial solution where both polynomial coefficients (\ref{2}) and (\ref{1}) are zero at the same time.
\begin{equation}\label{2}
 b'_{1,n-3} = -n^3  +6\alpha n^2+ 3n^2  -12 \alpha^2 n -6\alpha n -2n +8 \alpha^3 +4\alpha
\end{equation}
\begin{equation}\label{1}
 b_{1,n-2} = \dfrac{1}{3}(-n^3+7\alpha n^2+3n^2-16\alpha^2n-9\alpha n-2n +12\alpha^3+6\alpha)
\end{equation}
Solving for the roots of these polynomials, we get two shared roots of the form $\alpha = \frac{n \pm \sqrt{-2+3n}}{2} $ which reduces to $\alpha = 5$ and $\alpha=1$. Similar to the methods used in the previous section, there is a 1-dimensional non-trivial solution space given by the equivalence classes of:
\begin{align*}
 & k^5 \theta^{(1)} + 5 k^4 \theta^{(2)} + 30 k^3 \theta^{(3)} - 60 k^2 \theta^{(4)}, \text{ for } \alpha =5, \beta =0. \\
&6 k^6 \theta^{(0)} - 11 k^5 \theta^{(1)} + 5 k^4 \theta^{(2)} + 30 k^3 \theta^{(3)}- 60 k^2 \theta^{(4)}, \text{ for } \alpha =1, \beta =-4.
\end{align*}

\subsubsection{Cocycles of degree 7}

When $n =7$, the rank of the coefficient matrix will equal $n-3$ when $b'_{1,n-3} = b_{1,n-2} = b'_{2,n-3} = b_{2,n-2} = 0$. This means there is only a non-trivial solution space when all four polynomial are zero at the same time.

Solving for the roots of $b'_{1,n-3}, b_{1,n-2}, b'_{2,n-3}$ and $b_{2,n-2}$, all four have the common roots of $\alpha = \frac{n \pm \sqrt{-2+3n}}{2}$. This gives 2 cases of non-trivial cocycles
\begin{align*}
 \text{For } \alpha = \dfrac{7 + \sqrt{19}}{2}:& -\dfrac{22+\sqrt{19}}{4} k^7 \theta^{(0)}+ \dfrac{31+7\sqrt{19}}{2}k^6 \theta^{(1)} - (25+7\sqrt{19})k^5 \theta^{(2)}
 \\ 
 & + 30 k^4 \theta^{(3)} + 120 k^3 \theta^{(4)} - 240 k^2 \theta^{(5)} 
 \\
 \text{For }\alpha = \dfrac{7 - \sqrt{19}}{2}: &-\dfrac{22-\sqrt{19}}{4} k^7 \theta^{(0)}+ \dfrac{31-7\sqrt{19}}{2}k^6 \theta^{(1)} - (25-7\sqrt{19}) k^5 \theta^{(2)}
 \\
 &  +30 k^4 \theta^{(3)} + 120 k^3 \theta^{(4)} - 240 k^2 \theta^{(5)} 
\end{align*}

\subsubsection{Cocycles of degree greater than or equal to 8}

When $n \geq 8$, the system will only admit non-trivial cocycles when 
\[b'_{1,n-3} = b_{1,n-2} = b'_{2,n-3} = b_{2,n-2} = b'_{3,n-3} = b_{3,n-2} = \cdots = b'_{n-5,n-3} = b_{n-5,n-2} = 0
\]
 If there exists a common solution to these formulae, then this solution must satisfy all of the first four formulae, thus it necessarily must be of the form $\alpha =  \frac{n \pm \sqrt{-2+3n}}{2}$. It is easy to check that this is not a solution to $b'_{3,n-3}$ or $b_{3,n-2}$. This means the rank of the matrix is always at least $n-2$ and thus this case never admits a non-trivial solution space.

\section{Delta Cocycles} 

For this section, we assume that the conditions for delta function cocycles hold, i.e. $\alpha=0$ and $\gamma =0$. These cocycles will classify extensions of the form
\[
0 \rightarrow D(0) \rightarrow M \rightarrow T(\beta, 0) \rightarrow 0
\]
 As $D(0) \subset T(0,0)$, this extension will admit an extension
\[
0 \rightarrow T(0,0) \rightarrow M' \rightarrow T(\beta, 0) \rightarrow 0
\]
In this section, we will find these delta cocycles in the context $M'$ rather than $\widetilde{M}_N$, where $\delta_k(t^m) = \delta_{k+m,0}v_{m+k}$.
These delta cocycles will be of the form 
\[
\tau(k,m) = \delta_{m+k,0} f(k,m)= \delta_{m+k,0} f(k,-k)
\]
Thus, $f$ is a function of $k$. In fact, by Theorem \ref{theorem} for $e_k . w_{-k}$ and $\beta \neq 1$, $f(k)$ must be a polynomial function in $k$. As we will deal with the special case of $\beta =1$ in the next section, for now we will assume that $f(k)$ is a polynomial in $k$ for every $\beta$.

\subsection{Conditions for delta functions}

As derived in Section 2, cocycles must satisfy the relation (\ref{taucondition}) so for $\tau(k,m) = \delta_{k+m, 0} f(k)$,
this reduces to:
\begin{equation}\label{validdelta}
 (s-k) f(k+s) = ((\beta-1)s-k) f(k) - ((\beta-1)k-s) f(s)
\end{equation}

A trivial cocycle is a 1-coboundary which becomes the zero function under some change of basis. By the change of basis $u_m = w_m + c \delta_{m,0}v_0$, we obtain that $\tau(k,m) = k (1-\beta) \delta_{k+m,0}$ is the trivial cocycle. 

To solve for cocycles $\tau(k,m) = \delta_{k+m,0}f(k)$, we know that the function $f(k)$ must be polynomial so that for some $n\in \mathbb{N}, f(k)$ is given by
\[
 f(k) = a_n k^n + a_{n-1} k^{n-1} + \cdots + a_1 k + a_0.
\]

Equation (\ref{validdelta}) is homogeneous in $s$ and $k$ so each case $f(k) = k^m$ can be treated separately. 
As in the previous section, we simply solve the system of coefficients we obtain from (\ref{validdelta}) and find a non-trivial solution to obtain the equivalence classes described below.
\begin{align}
\delta_{k+m,0},& \text{ when } \beta =1\\
\delta_{k+m,0} k,& \text{ when } \beta=1\\
\delta_{k+m,0} k^2 ,& \text{ when } \beta=0\\
\delta_{k+m,0} k^3 ,& \text{ when } \beta=-1
\end{align}

When $n \geq 4$, by (\ref{validdelta}) we can obtain the system
\[
0= (s-k)(k+s)^n - ((\beta-1)s-k)k^n + ((\beta-1)k-s)s^n
\] 
The coefficient at $s^{n-1} k^2$ for $n \geq 4$ will be given by:
\begin{align}
 \left(  { n \choose 2 } - {n \choose 1} \right) &= \dfrac{1}{2} n(n-1) -n \neq 0 \text{ when } n \neq 3
\end{align}

Thus, this coefficient is non-zero for any value of $\beta$. Therefore this system will only admit the trivial cocycle as a solution and there are no other cocycles of this form.

\section{The case of $\beta=1$}

The approach used in Section 2 to find conditions on our cocycles depended on the fact that $\beta \neq 1$ as 
there was no obvious way in which to define $\widetilde{M}_N$. In particular, $\varepsilon_j \notin \widehat{T(1, \gamma)}$ for any $j \in \gamma + \mathbb{Z}$. We deal with this case here, using a similar approach by defining a basis of $M$ using $\widehat{M}$.

As we have already found all possible polynomial cocycles, we will now assume that cocycles are strictly non-polynomial. 

\subsection{The structure of non-polynomial cocycles}

 First, we would like to find an appropriate basis of $M$. Introduce $\sigma_0 \in \widehat{M}$ as $ \sigma_0 = \phi(e_0, w_{\gamma})$ and set $\widehat{w}_{m+\gamma} = \frac{1}{m+\gamma} \sigma_0 (t^m)$ for $m+\gamma \neq 0$ and let $\widehat{w}_0 = w_0$. Then $\widehat{w}_{m+\gamma} = w_{m+\gamma} + \frac{\tau(m,\gamma)}{m+\gamma} v_{m+\gamma}$ for all $k \in \mathbb{Z}$ and $m \in \mathbb{Z}$ such that $m+\gamma \neq 0$. 

By Theorem \ref{theorem}, $\widehat{M} \cong A \otimes \widehat{M}_\gamma$ for some finite representation $\widehat{M}_\gamma$ of $\mathscr{L}_+$. Then $\sigma_0(t^m) = t^{m} \otimes \sigma$ for some $\sigma \in \widehat{M}_\gamma$. For $m+\gamma \neq 0$,
\begin{align*}
e_k  \widehat{w}_{m+\gamma} 
&= (m+k+\gamma) \widehat{w}_{m+k+\gamma} + \frac{1}{m+\gamma} \left(ku_1 (t^{m+k}) + \cdots + k^nu_n(t^{m+k}) \right) 
\end{align*}
for all $k \in \mathbb{Z}$, where $u_i = \frac{1}{i!} \rho \left( z^i \frac{d}{dz} \right) \sigma$ as in Theorem \ref{theorem}. 

Notice that $(e_k \widehat{\pi}(\sigma_0))(t^m) = 0, \, \forall m \in \mathbb{Z}$ such that $m+\gamma \neq 0$. It follows from $e_k \phi(e_0, \overline{w}_{\gamma}) = 0 $ that $ \phi(e_0, \overline{w}_{\gamma}) =0$ and so $\widehat{\pi}(\sigma_0) \in \widehat{M}(\alpha,\gamma)$. 
This shows that $k u_1 + \cdots + k^n u_n \in \widehat{M}(\alpha,\gamma)$, so that for the cocycle $\widetilde{\tau}(k,m) = \frac{1}{m} (k u_1+ \cdots + k^n u_n)$ defined on $m \in \gamma +\mathbb{Z}$ non-zero, $m \widetilde{\tau}(k,m)$ is contained in a submodule of the form described in Proposition \ref{Prop}. In other words, the cocycle $\widetilde{\tau}(k,m)$ will be a polynomial function or a $\delta_{m+k,0}$ function when $\alpha=0$, with a factor of $m^{-1}$. Here we may apply Theorem \ref{theorem} to obtain that if a cocycle is given by $\frac{1}{m}\delta_{m+k,0}f(k)$, $f(k)$ is polynomial in $k$.

The cocycle $\widehat{\pi}(\widetilde{\tau})=\tau'(k,m)$ on $M$ is not defined when $m=0$. This $\tau$-function completely determines the cocycle on the short exact sequence with $T^\circ(1)$. To extend this cocycle onto $M$, we introduce a possible $\delta_{m,0}$ function to determine the cocycle on the basis vector $w_0$.
\[
e_k w_0 = k \widehat{w_k} + \mu(k)v_{k}
\]

We can extend the cocycle to the whole space of $M$ by setting $\tau(k,m)$ to be the piecewise function:
\[
\tau(k,m) = \left\lbrace \begin{array}{ll} \tau'(k,m), & m \neq 0 \\ \mu(k),&m=0 \end{array} \right.
\]
These two components are independent thus we may consider each case separately. 
Hence, for $\beta =1$, cocycles in $M$ can be polynomial functions in $k$ with a possible factor of $m^{-1}$, $\delta_{k+m,0}$ functions with a possible factor of $m^{-1}$ or $\delta_{m,0}$ functions. 

\subsection{Cocycles with a factor of $m^{-1}$}

Suppose that $\tau'(k,m) = m^{-1} \mu(k,m)$ for $k \in \mathbb{Z}, m \in \gamma + \mathbb{Z}$. If this is a cocycle, (\ref{taucondition})
reduces this to the case that $\mu(k,m)$ is a cocycle for $\beta =0$. 

In the case that $\alpha=0$ and $\mu(k,m) = \delta_{k+m, 0}f(k)$ there are two cocycles for $\beta=0$: the trivial cocycle $\delta_{k+m,0}k$ and the nontrivial cocycle $\delta_{k+m,0}k^2$.
This first case gives a cocycle of the form $\delta_{k+m,0}$ which was found to be non-trivial in Section 4.
The second case is exactly the equivalence class of $\delta_{k+m,0} k$ found in Section 4.

In the case that $\mu(k,m)$ is a polynomial function, then we would like to see when this cocycle is a coboundary. In other words, there exists $g: \gamma + \mathbb{Z} \rightarrow \mathbb{C}$ such that 
\[
g(m) (m+\alpha k) - g(m+k) (m+k) =  \frac{1}{m} \mu(k,m)
\]
But if $kg(k) = f(k)$, then this reduces to 
\begin{align*}
f(m) (m+ \alpha k) - f(m+k) m &= \mu(k,m)
\end{align*}
which is the same condition as $\mu(k,m)$ equivalent to the trivial polynomial cocycle for $\beta =0$. Thus, if $\frac{1}{m} \mu(k,m)$ is a non-trivial cocycle  for $\beta=1$, $\mu(k,m)$ must be a non-trivial cocycles for $\beta=0$.

In Section 6, we describe the polynomial equivalence classes of cocycles in $M$. Using Table 6.2, there are 7 possible forms of the function $\mu(k,m)$.
Most of these cases will reduce to equivalence classes already known. Three cases admit new cocycles, giving the equivalence classes
\[
\begin{array}{lc}
      \alpha =0, \beta =1, & m^{-1}k \\ 
      \alpha =1, \beta =1, & m^{-1} k^2 \\
      \alpha =2, \beta =1, &  m^{-1} k^3 +k^2
\end{array}
\]

\begin{remark} These functions are not polynomial but can be considered such under some change of basis.

Consider the homomorphism from $T(0,\gamma) \rightarrow T(1, \gamma)$ by $v_m \rightarrow m w_m$. For $\gamma \notin \mathbb{Z}$, this map is bijective so that $T(0,\gamma) \cong T(1,\gamma)$. In the case that $\gamma \in \mathbb{Z}$, the image of this map is $T^\circ (1)$ and the kernel is $\mathbb{C}v_0=D(0)$ so that
\[
T(0,0)/D(0) \cong T^\circ(1)
\]
But as these $\tau'$-functions are zero at $m=0$, they can be considered cocycles defined on $T^\circ(1)$.
By applying this map to $w_m \in T(1, \gamma)$ to $w_m' \in T(0, \gamma)$,
\begin{align*}
e_k. w_m &= (k+m)w_{m+k} + \frac{1}{m}\mu(k,m) v_{m+k} \\
m e_k .w_m &= m (k+m)w_{m+k} + \mu(k,m) v_{m+k} \\
e_k .w_m' &= m w_m' + \mu(k,m) v_{m+k}
\end{align*}
and $\mu(k,m)$ is a polynomial cocycle. Although these module extensions are not isomorphic, these cocycles can be thought of as polynomials in some sense.
\end{remark}

\subsection{Delta cocycles}

As in Section 4, we will consider delta functions of the form $\delta_{m,0}f(k,m)$. This reduces to $f(k,m) = \mu(k)$ where $\mu$ is polynomial in $k$. If we take the change of basis $w_0 ' = w_0 + v_0$, then we find that $\delta_{m,0} \alpha k$ is the trivial cocycle.

Now, we take $\tau(k,m) = \delta_{m,0} \mu(k)$ 
so by the cocycle condition (\ref{taucondition}),
\begin{equation}\label{delta}
 (k-s) \mu(k+s) + (k+\alpha  s) \mu(k) - (s+\alpha k) \mu(s)=0
\end{equation}

Using this condition, we may derive an infinite system of equations on $\mu(k)$ for $k \in \mathbb{Z}$. Most importantly, we obtain the following 5 conditions:
\[\begin{array}{l}
 \alpha \mu(0) =0 \\
 \text{ } \; \mu(3) = (2+\alpha) \mu(2) - (1+2 \alpha)\mu(1)\\ 
 \text{ } \; \mu(4) = \frac{1}{2} (3 + \alpha)(2+\alpha) \mu(2) - (2+5\alpha + \alpha^2) \mu(1) \\ 
 \text{ } \; \mu(5) = \frac{1}{6} (4+\alpha)(3+\alpha)(2+\alpha) \mu(2) - \frac{1}{3} (9 + 26 \alpha + 9 \alpha^2 + \alpha^3) \mu(1) \\ 
 \text{ } \; \mu(5) = (4+4\alpha + 2\alpha^2) \mu(2) - (3 + 8 \alpha + 4 \alpha^2) \; \mu(1)
 \end{array}
 \]
 
The last two of these equations give us $(2\alpha - 3 \alpha^2 + \alpha^3 ) \mu(2) = 2 (2\alpha - 3 \alpha^2 + \alpha^3 ) \mu(1)$.
 As $2\alpha - 3 \alpha^2 + \alpha^3 = \alpha(\alpha-1)(\alpha-2)$, then $\mu(2) = 2\mu(1)$ as long as $\alpha \neq 0,1,2$. 

The proof of the next four lemmas are very similar. Consequently, we will only include the proof of the first one.

\begin{lemma}
 If $\alpha \neq 0,1,2$, then $\mu(n) = n \mu(1)$ for $n \geq 3$.
\end{lemma}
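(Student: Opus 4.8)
The plan is to prove the statement by induction on $n$, using the single first-order recurrence that the cocycle condition (\ref{delta}) produces when one of its two free indices is frozen at $1$. The genuine input has in fact already been supplied: the relation $\mu(2) = 2\mu(1)$, which is valid precisely because $\alpha \neq 0,1,2$ makes $\alpha(\alpha-1)(\alpha-2)$ invertible. Once that identity is in hand, the lemma is a clean bookkeeping induction built on top of it.

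First I would specialize (\ref{delta}) by setting $s=1$, which yields a relation of the shape
\[
(k-1)\mu(k+1) = (k+\alpha)\mu(k) - (1+\alpha k)\mu(1),
\]
valid for all $k$. For $k \geq 2$ the leading factor $k-1$ is nonzero, so this expresses $\mu(k+1)$ in terms of $\mu(k)$ and $\mu(1)$ alone; it is therefore a genuine first-order recurrence once $\mu(1)$ and $\mu(2)$ are fixed. Taking $k=2$ and substituting the established value $\mu(2) = 2\mu(1)$ recovers $\mu(3) = 3\mu(1)$, which matches the first of the relations listed before the lemma and supplies the base case of the induction.

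For the inductive step, I would assume $\mu(n) = n\mu(1)$ for some $n \geq 2$ and feed this into the recurrence with $k=n$. The right-hand side becomes $(n+\alpha)\,n\mu(1) - (1+\alpha n)\mu(1) = (n^2-1)\mu(1) = (n-1)(n+1)\mu(1)$, and dividing by the nonzero factor $n-1$ gives $\mu(n+1) = (n+1)\mu(1)$. Since the base case $n=2$ already produces $\mu(3) = 3\mu(1)$, iterating establishes $\mu(n) = n\mu(1)$ for every $n \geq 3$.

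The only place where anything can go wrong is the division by $n-1$, which is harmless since $n \geq 2$ throughout; the hypothesis $\alpha \neq 0,1,2$ enters only indirectly, through the prior identity $\mu(2) = 2\mu(1)$ on which the whole induction rests. I expect the main (and essentially only) subtlety to be confirming that the frozen-index specialization of (\ref{delta}) has exactly the sign and coefficients written above, after which the recurrence telescopes precisely into $\mu(n) = n\mu(1)$ and everything else is routine algebra. The same first-order recurrence, with the appropriate base values, is what will drive the proofs of the remaining three lemmas that the text defers.
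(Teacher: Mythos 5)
Your proof is correct and essentially identical to the paper's: the paper likewise freezes one index of the cocycle condition at $1$ to obtain the recurrence $(n-1)\mu(n+1) = (n+\alpha)\mu(n) - (1+\alpha n)\mu(1)$, verifies the base case $\mu(3)=3\mu(1)$ from $\mu(2)=2\mu(1)$, and runs the same telescoping induction. One remark: the sign you wrote for the recurrence is the one consistent with the relations listed before the lemma and with the paper's own proof, whereas equation (\ref{delta}) as literally printed has its first term as $(k-s)\mu(k+s)$ rather than $(s-k)\mu(k+s)$ — a sign typo in the paper, which you implicitly (and correctly) corrected.
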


\begin{proof}
 If $n=3$, then $\mu(3) = (2(2+\alpha) -(1+2\alpha) )\mu(1) = 3\mu(1)$.

By induction, set $s=n, k=1$, then
\begin{align*}
 \mu(n+1) &= \frac{1}{n-1} \left( (n+\alpha) \mu(n) - (1+n \alpha) \mu(1) \right)\\
&= \frac{1}{n-1} \left( (n+\alpha) n- (1+n \alpha) \right)\mu(1) \\
&= \frac{1}{n-1} \left( n^2- 1 \right)\mu(1) \\
&= (n+1)\mu(1)
\end{align*}
\end{proof}

Thus by the above recurrence relation $\mu(k) = k \mu(1)$ except when $\alpha =0, 1$ or $2$. The next lemma considers $\mu(k)$ in these three special cases. 

\begin{lemma}
\begin{enumerate}
\item If $\alpha = 0$, then $\mu(n) = (n-1)\mu(2) - (n-2) \mu(1)$ for $n \geq 3$ and $\mu(k)$ will be a polynomial of degree 1. 
\item  If $\alpha = 1$, then $\mu(n) = \frac{n(n-1)}{2}\mu(2) - (n^2 - 2n) \mu(1)$ for $n \geq 3$ and $\mu(k)$ is a polynomial of degree 2.
\item  If $\alpha = 2$, then $\mu(n) = \frac{n^3 -n}{6}\mu(2) - \frac{n^3 - 4n}{3} \mu(1)$ for $n \geq 3$ and $\mu(k)$ is a polynomial of degree $3$.
\end{enumerate}
\end{lemma}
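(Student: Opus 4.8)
The plan is to follow the inductive strategy of the preceding lemma, now carried out for each of the three exceptional values of $\alpha$ separately. The engine is the same two-term recurrence obtained by setting $k=1$, $s=n$ in (\ref{delta}),
\[
\mu(n+1) = \frac{1}{n-1}\Big((n+\alpha)\mu(n) - (1+n\alpha)\mu(1)\Big), \qquad n \geq 2,
\]
which expresses every $\mu(n)$ with $n \geq 3$ as a linear combination of $\mu(1)$ and $\mu(2)$. The conceptual point to record first is \emph{why} $\mu(2)$ survives as a second independent parameter here: in the generic argument the two expressions for $\mu(5)$ forced $\alpha(\alpha-1)(\alpha-2)\,(\mu(2)-2\mu(1))=0$, and exactly when $\alpha \in \{0,1,2\}$ this constraint is vacuous. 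Hence for these $\alpha$ the solution space is the full two-parameter family in $\mu(1),\mu(2)$, and the claimed closed forms are merely the explicit description of that family.

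With this in place I would prove each formula by induction on $n \geq 3$. The base case $n=3$ is immediate from the listed relation $\mu(3)=(2+\alpha)\mu(2)-(1+2\alpha)\mu(1)$, which specializes to $2\mu(2)-\mu(1)$, $3\mu(2)-3\mu(1)$, and $4\mu(2)-5\mu(1)$ for $\alpha=0,1,2$, matching each closed form at $n=3$. For the inductive step I substitute the hypothesis into the specialized recurrence; the only thing that must actually be checked is that the resulting numerator is divisible by $n-1$. For $\alpha=0$ the numerator $n\mu(n)-\mu(1)$ becomes $n(n-1)\mu(2)-(n-1)^2\mu(1)$; for $\alpha=1$ the recurrence reads $\mu(n+1)=\tfrac{n+1}{n-1}\big(\mu(n)-\mu(1)\big)$ and the hypothesis makes $\mu(n)-\mu(1)$ pick up an explicit factor $(n-1)$; and for $\alpha=2$ the cancellation rests on the polynomial identity $n^4+2n^3-4n^2-2n+3=(n-1)^2(n+1)(n+3)$, which is exactly the numerator of the coefficient of $\mu(1)$. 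In every case the $(n-1)$ cancels and leaves precisely the claimed closed form evaluated at $n+1$, for instance $\tfrac{n(n+1)(n+2)}{6}\mu(2)-\tfrac{(n-1)(n+1)(n+3)}{3}\mu(1)$ when $\alpha=2$.

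The degree statements then require no further work: the closed forms are visibly polynomials in $n$ of degree $1$, $2$, and $3$ respectively, and they restore the correct values at $n=1,2$, so $\mu$ agrees with a single polynomial on the relevant range. I expect the only real obstacle to be the bookkeeping in the $\alpha=2$ case, where divisibility by $n-1$ rests on the quartic factorization above rather than on a transparent perfect square; once that identity is in hand, all three inductions close by the same routine cancellation, which is precisely why one may present only the first member of the family in full detail.
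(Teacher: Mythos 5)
Your proposal is correct and follows essentially the same route as the paper: the paper omits this proof, stating it is ``very similar'' to the preceding lemma's, which is precisely the induction via the $k=1$, $s=n$ specialization $\mu(n+1)=\frac{1}{n-1}\bigl((n+\alpha)\mu(n)-(1+n\alpha)\mu(1)\bigr)$ that you carry out. Your verification of the base cases, the cancellation of the factor $n-1$ in each of the three cases (including the factorization $n^4+2n^3-4n^2-2n+3=(n-1)^2(n+1)(n+3)$ for $\alpha=2$), and the degree statements are all accurate.
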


Thus if $\mu(k)$ is non-trivial, then it has possible values of:
\[
\begin{array}{ll}
      \alpha=0, & \mu(k) =k \text{ or } 1\\  
      \alpha =1, & \mu(k) = k^2 \\ 
      \alpha=2, & \mu(k) = k^3 
      \end{array}
      \]
which are exactly the dual delta function cocycles we found in Section 4.

\section{Conclusion}

A summary of the results are given in Table 1. 
Table 6.2 describes all cocycles for the original extension. By directly applying the map $\pi: \widetilde{M}_N \rightarrow M$, we do not obtain exactly the same cocycle, only a cocycle that is in the same equivalence class. This will not change the equivalence class though, so that we can shift the representative of the class to a function of the same form. The choice of these equivalence classes are a bit arbitrary; they are modelled after the results of Feigin and Fuks \ref{FF}. There is a typo in \ref{FF} in the case of the degree 7 cocycles, otherwise we obtain the same results.

\begin{table}[htbp]
 \centering
 \caption{Polynomial cocycles in $\widetilde{M}_N$}
 \renewcommand{\arraystretch}{1.4}
 \small
  \begin{tabular}{|l|r|c|}\hline
   $\alpha -\beta =0$ & $n=1$ & $k \theta^{(0)}$        \\ \hline 
   $\alpha=1,\beta=0$ & $n=2$ & $k \theta^{(1)}$        \\ \hline
   $\alpha=1,\beta=0$ &$n=2$ &  $k^2 \theta^{(0)} $     \\ \hline
   $\alpha - \beta =2$ &$n=3$ &  $k^3 \theta^{(0)} - 2 k^2 \theta^{(1)}$                                                       \\ \hline
   $\alpha - \beta =3$ &$n=4$ &  $k^3 \theta^{(1)} - 2 k^2 \theta^{(2)}$\\ \hline
   $\alpha - \beta =4$ &$n=5$ &  $(\alpha-1) k^5 \theta^{(0)} -k^4 \theta^{(1)} - 12k^3 \theta^{(2)} + 24 k^2 \theta^{(3)}$   \\ \hline
   $\alpha=1,\beta=-4$ &$n=6$ &  $ 2 k^5 \theta^{(1)} + 10 k^4 \theta^{(2)} + 60 k^3 \theta^{(3)} - 120 k^2 \theta^{(4)}$ \\ \hline
   $\alpha=5,\beta=0$ &$n=6$ &  $12 k^6 \theta^{(0)} - 22 k^5 \theta^{(1)} + 10 k^4 \theta^{(2)} + 60 k^3 \theta^{(3)}- 120 k^2 \theta^{(4)}$ \\ \hline
   $\alpha=\frac{7+\sqrt{19}}{2},$ & $n=7$ & $-\frac{22+\sqrt{19}}{4} k^7 \theta^{(0)}+ \frac{31+7\sqrt{19}}{2}k^6 \theta^{(1)}- (25+7\sqrt{19})k^5 \theta^{(2)} $ \\
  $\beta=-\frac{5+\sqrt{19}}{2}$ & & $  + 30 k^4 \theta^{(3)} + 120 k^3 \theta^{(4)} - 240 k^2 \theta^{(5)} $ \\ \hline
    $\alpha=\frac{7-\sqrt{19}}{2},$ & $n=7$ & $-\frac{22-\sqrt{19}}{4} k^7 \theta^{(0)}+ \frac{31-7\sqrt{19}}{2}k^6 \theta^{(1)} - (25-7\sqrt{19}) k^5 \theta^{(2)}$ \\
  $\beta=-\frac{5-\sqrt{19}}{2}$ & & $  +30 k^4 \theta^{(3)} + 120 k^3 \theta^{(4)} - 240 k^2 \theta^{(5)} $ \\ \hline
 \end{tabular}
\end{table}

Most extensions admit classes of cocycles for a range of $\alpha$ and $\beta$. The few exceptions occur in pairs; these pairs are module extensions that are dual to each other in the sense that for the extension ${M}/{T(\alpha,\gamma)} \cong T(\beta, \gamma)$, the dual extension is given by ${M'}/{T(1-\beta, -\gamma)}\cong T(1-\alpha,-\gamma)$ with the dual cocycle $\tau^*(k,m) = \tau(k,-m-k)$. 

\begin{table}[htbp]\label{second}
 \centering
 \caption{Polynomial cocycles in $M$}
 \renewcommand{\arraystretch}{1.4}
 \small
  \begin{tabular}{|l|r|c|}\hline
   $\alpha -\beta =0$ & $n=1$ & $k $        \\ \hline 
   $\alpha=1,\beta=0$ & $n=2$ & $k m$        \\ \hline
   $\alpha=1,\beta=0$ &$n=2$ &  $k^2 $     \\ \hline
   $\alpha - \beta =2$ &$n=3$ &  $k^3  + 2 k^2m$  \\ \hline
   $\alpha - \beta =3$ &$n=4$ &  $ k^3m + k^2m^2  $\\ \hline
   $\alpha - \beta =4$ &$n=5$ &  $(\alpha-4) k^5 +k^4m- 6k^3m^2  - 4 k^2 m^3    $   \\ \hline
   $\alpha=1,\beta=-4$ &$n=6$ &  $ 12 k^6 +22 k^5m + 5k^4m^2  -10 k^3m^3 - 5k^2m^4 $ \\ \hline
   $\alpha=5,\beta=0$ &$n=6$ &  $2k^5 m - 5 k^4m^2 + 10 k^3m^3 + 5 k^2m^4$ \\ \hline
   $\alpha=\frac{7+\sqrt{19}}{2},$ & $n=7$ & $-\frac{22+\sqrt{19}}{4} k^7 - \frac{31+7\sqrt{19}}{2}k^6 m$ \\
  $\beta=-\frac{5+\sqrt{19}}{2}$\hspace{1pt} & & $  - \frac{25+7\sqrt{19}}{2} k^5m^2 -5 k^4 m^3 + 5 k^3m^4 + 2 k^2m^5 $ \\ \hline
    $\alpha=\frac{7-\sqrt{19}}{2},$ & $n=7$ & $-\frac{22-\sqrt{19}}{4} k^7 - \frac{31-7\sqrt{19}}{2}k^6 m$ \\
 $\beta=-\frac{5-\sqrt{19}}{2}$\hspace{1pt} & & $  - \frac{25-7\sqrt{19}}{2} k^5 m^2 -5 k^4 m^3 + 5 k^3m^4 + 2 k^2 m^5 $ \\ \hline
 \end{tabular}
\end{table}

These dual modules are also useful in finding delta functions. In fact, all delta function cocycles can be obtained from the work done in Section 3 as every delta cocycle obtained in Section 5 is dual to a cocycle found in Section 4.

\begin{table}[htbp]
 \centering
 \caption{Non-polynomial cocycles}
 \renewcommand{\arraystretch}{1.4}
 \small
   \begin{tabular}{|c|c|c|}\hline
   $\alpha =0, \beta=-1$ & $\gamma\in \mathbb{Z}$ & $\delta_{k+m,0} k^3$     \\ \hline
   $\alpha =0, \beta=0$ & $\gamma\in \mathbb{Z}$ & $\delta_{k+m,0}k^2$      \\ \hline
   $\alpha =0, \beta =1$ & $\gamma\in \mathbb{C}$ & $m^{-1}k$              \\ \hline
   $\alpha =0, \beta=1$ & $\gamma\in \mathbb{Z}$ & $\delta_{k+m,0} $          \\ \hline
   $\alpha =0, \beta=1$ & $\gamma\in \mathbb{Z}$ & $\delta_{k+m,0} k$      \\ \hline
   $\alpha =0, \beta=1$ & $\gamma\in \mathbb{Z}$ & $\delta_{m,0} k$        \\ \hline
   $\alpha =1, \beta =1$ & $\gamma\in \mathbb{C}$ & $m^{-1} k^2$           \\ \hline
   $\alpha =1, \beta=1$ & $\gamma\in \mathbb{Z}$ & $\delta_{m,0} k^2$        \\ \hline
   $\alpha =2, \beta =1$ & $\gamma\in \mathbb{C}$ & $ m^{-1} k^3 +k^2$   \\ \hline
   $\alpha =2, \beta=1$ & $\gamma\in \mathbb{Z}$ & $\delta_{m,0} k^3$        \\ \hline
   \end{tabular}
\end{table}

Through the method described in the paper, two delta function cocycles of degree zero were found in the case of $\alpha=0$ and $\beta =1$. As shown in Section 3, the sum of these cocycles in a 1-coboundary so that, in fact, these cocycles are in the same equivalence class. 

These delta cocycles are of interest for another reason. Using the delta functions that give an extra factor on $v_0$ in the $W_1$-action on $T(\beta, \gamma)$, i.e. cocycles of the form $\delta_{k+m,0}\mu(k)$, we can construct new Lie algebras. Let $L$ be the Virasoro algebra which is spanned by $\lbrace L(m),c_1\rbrace$ where $c_1$ is the central extension, $m\in \mathbb{N}$. Suppose that module $V$ is a module for the Virasoro algebra with basis $\lbrace W(m), c_3 \rbrace$. By viewing $V$ as a (possibly abelian) Lie algebra, we can look at the semi-direct product of $L$ with $V$ spanned by $\lbrace L(m),W(m),c_1,c_2,c_3\rbrace$ with bracket:
\begin{align*}
  [L(k),L(m)] =& (m-k) L(k+m) + \delta_{k+m,0} \frac{k^3-k}{12} c_1\\
  [L(k), W(m)] =& (m-\beta k)W(k+m) + \delta_{k+m, 0} \mu(k) c_2
\end{align*}
where $V$ is a subalgebra and the bracket of any element with a central extension $c_1, c_2, c_3$ is simply zero. 

In this way, we can construct what is called the $W(2,2)$ algebra \ref{W22}, which corresponds to the cocycle $\delta_{k+m, 0}k^3, \beta =-1$. Similarly, we can construct the twisted Heisenberg-Virasoro algebra \ref{repsTHV} which will correspond to the cocycle $\delta_{k+m,0}k^2, \beta =0$.

We can construct two more algebras of this form, given by the cases of $\delta_{k+m,0}, \beta =1$, $\delta_{k+m,0}k, \beta =1$. As in the previous cases, $V$ can be taken as an abelian algebra or as a Heisenberg algebra. In the case that $V$ is a Heisenberg algebra, this construction is only a Lie algebra if $\beta =0$. 

For the cocycle $\delta_{k+m,0}, \beta =1$, we take $V$ to be an abelian Lie algebra. The resulting algebra is given below, where the bracket with any central element is trivial.
\begin{align*}
  [L(k),L(m)] =& (m-k) L(m+n) + \delta_{k+m,0} \frac{k^3-k}{12} c_1\\
  [L(k), W(m)] =& (m+k) W(k+m) + \delta_{m+k,0} c_2 \\
  [W(k),W(m)] = & 0
\end{align*}

The case of $\delta_{k+m,0}k, \beta =1$ will construct the algebra given by the following, where the the bracket with any central element is trivial. 
\begin{align*}
  [L(k),L(m)] =& (m-k) L(m+n) + \delta_{k+m,0} \frac{k^3-k}{12} c_1 \\
  [L(k), W(m)] =& (m+k) W(k+m) + \delta_{m+k,0} k c_2 \\
  [W(k),W(m)] = & 0
\end{align*}
Notice that if we set $I(m) = mW(m)$, 
\[ [L(k), I(m)] = mI(m+k) - m^2 \delta_{m+k,0} \]
This action is very close to the twisted Heisenberg-Virasoro algebra, which only behaves differently on $W(0)$. 

The main goal of this work was to produce an explicit classification of length two module extensions of the Witt algebra. However, this method is promising in finding a similar classification of length two module extensions of the solenoidal subalgebra (see Definition 2.2 of \ref{cuspidal}) of $W_n$, and may even be used to find a classification of module extensions of this type for $W_n$.

\section*{Acknowledgements}

This research is funded by the Natural Sciences and Engineering Research Council of Canada. I would also like to thank my supervisor, Yuly Billig, whose help was crucial in the completion of this work.

\section*{References} 

\begin{enumerate}[label={[\arabic*]}]
 \item\label{modulispaces} E. Arbarello, C. De Concini, V.G. Kac, C. Procesi, Moduli spaces of curves and representation theory, \textit{Comm. Math. Phys.}, \textbf{117}, 1-36 (1988).

 \item\label{CFT} A.A. Belavin, A. M. Polyakov, A.B. Zamolodchikov, Infinite conformal symmetry in two-dimensional quantum field theory, \textit{Nuclear Physics B}, \textbf{241}, No. 2, 333-380 (1984)

 \item\label{jets} Y. Billig, Jet Modules, \textit{Canad. J. Math}, \textbf{59}, No.4, 712-729 (2007).
 
 \item\label{repsTHV} Y. Billig, Representations of the twisted Heisenberg-Virasoro algebra at level zero, \text{Canadian Mathematical Bulletin}, \textbf{46}, 529-537 (2003).

 \item\label{W_nmodules} Y. Billig, V. Futorny, Classification of simple $W_n$-modules with finite-dimensional weight spaces, arXiv:1304.5458 [math.RT]

 \item\label{cuspidal} Y. Billig, V. Futorny, Classification of simple cuspidal modules for solenoidal Lie algebras, arXiv:1306.5478 [math.RT]

 \item\label{conley} C. H. Conley, Bounded Length 3 Representations of the Virasoro Lie Algebra, \textit{Internat. Math. Res. Notices}, No. 12, 609-628 (2001).

 \item\label{FF} B. L. Feigin, D. B. Fuks, Homology of the Lie algebra of vector fields on the line, \textit{Funct. Anal. Appl.}, \textbf{14}, No.3, 201-212 (1980).

 \item\label{fuks} D.B. Fuks, \textit{Cohomology of Infinite-dimensional Lie Algebras}, Consultants Bureau, New York, 1986.
 
 \item\label{fuchs} J. Fuchs, \textit{Affine Lie algebras and quantum groups}, Cambridge University Press, New York, 1992.

 \item\label{humphrey} James E. Humphreys, \textit{Introduction to Lie algebras and Representation Theory}, Springer-Verlag, New York, 1972.

 \item\label{kac} V. G. Kac, A. K. Raina, N. Rozhkovskaya, \textit{Bombay lectures of highest weight representations of infinite dimensional Lie algebras}, Second Edition, World Scientific Publishing Co. Pte. Ltd, Singapore, 2013.

 \item\label{mp2} C. Martin, A. Piard, Indecomposable modules over the Virasoro Lie algebra and a conjecture of V. Kac, \textit{Commun. Math. Phys.}, \textbf{137}, 109-132 (1991).

 \item\label{mp} C. Martin, A. Piard, Classification of the indecomposable bounded admissible modules over the Virasoro Lie algebra with weightspaces of dimension not exceeding two, \textit{Commun. Math. Phys.}, \textbf{150}, 465-493 (1992).

 \item\label{Mathieu} O. Mathieu, Classification of Harish-Chandra modules over the Virasoro Lie algebra, \textit{Invent. math.}, \textbf{107}, 225-234 (1992).

 \item\label{triangular} R. Moody, A. Pianzola, \textit{Lie Algebras with Triangular Decompositions}, Wiley, New York, 1995.

 \item\label{W22} W. Zhang, C. Dong, $W$-algebra $W(2,2)$ and the vertex operator algebra $L \left(\frac{1}{2},0 \right)\otimes L \left(\frac{1}{2},0 \right)$, arXiv:0711.4624 [math.QA]

 \item\label{THV} X. Zhang, S. Tan, H. Lian, Unitary modules for the twisted Heisenberg-Virasoro algebra, arXiv:1201.1338 [math.RA]

 \item Y. Huang, K. Misra, \textit{Lie algebras, vertex operator algebras and their applications}, American Mathematical Society, Portland, 2007.
\end{enumerate}

\end{document}